\newtheorem{theorem}{Theorem}[section]
\newtheorem{proposition}[theorem]{Proposition}
\newtheorem{lemma}[theorem]{Lemma}
\newtheorem{corollary}[theorem]{Corollary}
\newtheorem{remark}[theorem]{Remark}
\newcommand{\qedsymb}{\hfill{\rule{2mm}{2mm}}}
\renewenvironment{proof}[1][]{\begin{trivlist}
\item[\hspace{\labelsep}{\bf\noindent Proof#1:\/}] }{\qedsymb\end{trivlist}}
\def\calZ{{\cal Z}}
\def\calC{{\cal C}}
\def\calA{{\cal A}}
\def\calB{{\cal B}}
\def\calE{{\cal E}}
\def\R{\mathbb{R}}
\newcommand\Prob[2]{{\Pr_{#1}\left[ {#2} \right]}}
\renewcommand{\epsilon}{\varepsilon}
\newcommand{\Fset}{\mathbb{F}}         
\begin{document}

\title{{\bf Larger Nearly Orthogonal Sets over Finite Fields}}

\author{
Ishay Haviv\thanks{School of Computer Science, The Academic College of Tel Aviv-Yaffo, Tel Aviv, Israel. Supported in part by the Israel Science Foundation (grant No.~1218/20).}
\and
Sam Mattheus\thanks{Department of Mathematics and Data Science, Vrije Universiteit Brussel, Brussel, Belgium. Supported by a postdoctoral fellowship 1267923N from the Research Foundation Flanders (FWO).}
\and
Aleksa Milojevi\'c\thanks{Department of Mathematics, ETH Z\"{u}rich, Z\"{u}rich, Switzerland. Email address: aleksa.milojevic@math.ethz.ch. Supported in part by SNSF grant 200021\_196965.}
\and
Yuval Wigderson\thanks{Institute for Theoretical Studies, ETH Z\"{u}rich, Z\"{u}rich, Switzerland. Email address: yuval.wigderson@eth-its.ethz.ch. Supported by Dr. Max R\"{o}ssler, the Walter Haefner Foundation, and the ETH Z\"{u}rich Foundation.}
}

\date{}

\maketitle

\begin{abstract}
For a field $\Fset$ and integers $d$ and $k$, a set $\calA \subseteq \Fset^d$ is called $k$-nearly orthogonal if its members are non-self-orthogonal and every $k+1$ vectors of $\calA$ include an orthogonal pair. We prove that for every prime $p$ there exists some $\delta = \delta(p)>0$, such that for every field $\Fset$ of characteristic $p$ and for all integers $k \geq 2$ and $d \geq k$, there exists a $k$-nearly orthogonal set of at least $d^{\delta \cdot k/\log k}$ vectors of $\Fset^d$. The size of the set is optimal up to the $\log k$ term in the exponent.
We further prove two extensions of this result.
In the first, we provide a large set $\calA$ of non-self-orthogonal vectors of $\Fset^d$ such that for every two subsets of $\calA$ of size $k+1$ each, some vector of one of the subsets is orthogonal to some vector of the other.
In the second extension, every $k+1$ vectors of the produced set $\calA$ include $\ell+1$ pairwise orthogonal vectors for an arbitrary fixed integer $1 \leq \ell \leq k$.
The proofs involve probabilistic and spectral arguments and the hypergraph container method.
\end{abstract}

\section{Introduction}

For a field $\Fset$ and an integer $d$, two vectors $u,v \in \Fset^d$ are called orthogonal if they satisfy $\langle u,v \rangle = 0$ with respect to the standard inner product defined by $\langle u,v \rangle = \sum_{i=1}^{d}{u_i \cdot v_i}$. A vector $u \in \Fset^d$ is called self-orthogonal if $\langle u,u \rangle = 0$, and it is called non-self-orthogonal otherwise.
For integers $k$ and $\ell$ with $k \geq \ell$, a set $\calA \subseteq \Fset^d$ is said to be $(k,\ell)$-nearly orthogonal if its vectors are non-self-orthogonal and any set of $k+1$ members of $\calA$ includes $\ell+1$ pairwise orthogonal vectors. Let $\alpha(d,k,\ell,\Fset)$ denote the largest possible size of a $(k,\ell)$-nearly orthogonal subset of $\Fset^d$. For the special case of $\ell=1$, we refer to a $(k,1)$-nearly orthogonal set as $k$-nearly orthogonal, and we let $\alpha(d,k,\Fset) = \alpha(d,k,1,\Fset)$.
Note that for a field $\Fset$ and an integer $d$, $\alpha(d,1,\Fset)$ is the largest possible size of a set of non-self-orthogonal vectors in $\Fset^d$ that are pairwise orthogonal, hence $\alpha(d,1,\Fset) = d$.

A simple upper bound on $\alpha(d,k,\Fset)$ stems from Ramsey theory.
To see this, consider a $k$-nearly orthogonal set $\calA \subseteq \Fset^d$, and let $G$ denote the graph on the vertex set $\calA$, in which two vertices are adjacent if and only if their vectors are orthogonal. Since the vectors of $\calA$ are non-self-orthogonal and lie in $\Fset^d$, the graph $G$ has no clique of size $d+1$. Since every $k+1$ members of $\calA$ include an orthogonal pair, the graph $G$ has no independent set of size $k+1$. It thus follows that the size of $\calA$ is smaller than the Ramsey number $R(d+1,k+1)$. Using the upper bound on Ramsey numbers of Erd{\H{o}}s and Szekeres~\cite{ErdosS35}, it follows that $\alpha(d,k,\Fset) < \binom{d+k}{k}$, so in particular, we have $\alpha(d,k,\Fset) \leq O(d^k)$ for every fixed integer $k$. A poly-logarithmic improvement follows from the upper bound on Ramsey numbers due to Ajtai, Koml{\'{o}}s, and Szemer{\'{e}}di~\cite{AjtaiKS80}.

The problem of determining the values of $\alpha(d,k,\Fset)$ where $\Fset$ is the real field $\R$ was suggested by Erd{\H{o}}s in the late 1980s (see~\cite{NR97Erdos}).
By considering a set that consists of the vectors of $k$ pairwise disjoint orthogonal bases of $\R^d$, it follows that $\alpha(d,k,\R) \geq k \cdot d$.
Rosenfeld~\cite{Rosenfeld91} proved that this bound is tight for $k=2$, and F{\"{u}}redi and Stanley~\cite{FurediS92} showed that $\alpha(4,5,\R) \geq 24$, which implies that it is not tight in general. They further showed that for every fixed integers $d$ and $\ell$, the limit $\lim_{k \rightarrow \infty}\alpha(d,k,\ell,\R)/k$ exists and grows exponentially in $d$.
Alon and Szegedy~\cite{AlonS99} proved that for every integer $\ell \geq 1$ there exists a constant $\delta = \delta(\ell) > 0$, such that for all integers $d$ and $k \geq \ell$ with $k \geq 3$, it holds that
\begin{eqnarray}\label{eq:AS}
\alpha(d,k,\ell,\R) \geq d^{\delta \cdot \log k / \log \log k},
\end{eqnarray}
where here and throughout the paper, all logarithms are in base $2$.
On the upper bound side, Balla, Letzter, and Sudakov~\cite{BallaLS20} proved that $\alpha(d,k,\R) \leq O(d^{(k+1)/3})$ for every fixed integer $k$, improving on the $O(d^k)$ bound that follows from the Erd{\H{o}}s--Szekeres bound. Yet, the known lower and upper bounds on $\alpha(d,k,\R)$ for general values of $d$ and $k$ are rather far apart.

In a recent paper, Balla~\cite{Balla23} considered a bipartite variant of the notion of nearly orthogonal sets, giving rise to the following definition. For a field $\Fset$ and integers $d$ and $k$, let $\beta(d,k,\Fset)$ denote the largest possible size of a set $\calA \subseteq \Fset^d$ of non-self-orthogonal vectors, such that for every two (not necessarily disjoint) sets $A_1, A_2 \subseteq \calA$ of size $k+1$ each, there exist vectors $v_1 \in A_1$ and $v_2 \in A_2$ with $\langle v_1, v_2 \rangle = 0$.
Since such a set $\calA$ is $k$-nearly orthogonal, it follows that $\alpha(d,k,\Fset) \geq \beta(d,k,\Fset)$. It was proved in~\cite{Balla23} that there exists a constant $\delta >0$, such that for all integers $d$ and $k \geq 3$, it holds that $\beta(d,k,\R) \geq d^{\delta \cdot \log k/ \log \log k}$. This strengthens the result given in~\eqref{eq:AS} for the case $\ell=1$.

The study of nearly orthogonal sets over finite fields was proposed by Codenotti, Pudl{\'{a}}k, and Resta~\cite{CodenottiPR00}.
Motivated by questions in circuit complexity, they explored the quantity $\alpha(d,2,\Fset_2)$, which in turn, attracted further attention in the area of information theory (see, e.g.,~\cite{BargZ22,BlasiakKL13,ChawinH24}).
In striking contrast to the real field~\cite{Rosenfeld91}, it was shown in~\cite{GolovnevH20} that there exists a constant $\delta>0$ such that $\alpha(d,2,\Fset_2) \geq d^{1+\delta}$ for infinitely many integers $d$. It was recently shown in~\cite{ChawinH24} that for every prime $p$ there exists a constant $\delta = \delta(p)>0$, such that for every field $\Fset$ of characteristic $p$ and for all integers $k \geq 2$ and $d \geq k^{1/(p-1)}$, it holds that $\beta(d,k,\Fset) \geq d^{\delta \cdot k^{1/(p-1)}/\log k}$. In particular, for the binary field, it follows that $\alpha(d,k,\Fset_2) \geq \beta(d,k,\Fset_2) \geq d^{\Omega ( k/\log k)}$, and this is tight up to the $\log k$ term in the exponent.

\subsection{Our Contribution}

In the present paper, we prove lower bounds on $\alpha(d,k,\ell,\Fset)$ and $\beta(d,k,\Fset)$ for fields $\Fset$ of finite characteristic.
The following theorem improves the aforementioned result of~\cite{ChawinH24} for all fields of finite characteristic at least $3$.

\begin{theorem}\label{thm:IntroBeta}
For every prime $p$, there exists a constant $\delta = \delta(p) > 0$, such that for every field $\Fset$ of characteristic $p$ and for all integers $k \geq 2$ and $d \geq k$, it holds that
\[ \beta(d,k,\Fset) \geq d^{\delta \cdot k/\log k}.\]
\end{theorem}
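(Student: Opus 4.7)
The plan is to refine the paradigm of~\cite{ChawinH24}: first build a large pseudorandom ground set $V \subseteq \Fset^d$ of non-self-orthogonal vectors whose orthogonality graph is spectrally close to random with parameters that do \emph{not} degrade in the characteristic $p$; then extract a large $\beta$-set from $V$ via the hypergraph container method. A $\beta$-set is precisely an independent set in the hypergraph $\calH$ whose vertices are non-self-orthogonal vectors and whose hyperedges are ordered pairs $(A_1, A_2)$ of $(k+1)$-subsets with no $v_1 \in A_1$ and $v_2 \in A_2$ satisfying $\langle v_1, v_2 \rangle = 0$. The critical novelty over~\cite{ChawinH24} --- which lost a factor $k^{1-1/(p-1)}$ in the exponent --- must come from an improved seed construction whose pseudorandom quality is uniform in $p$.

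\textbf{Seed and amplification.} Concretely, I would fix a constant $d_0 = d_0(p)$ and exhibit a subset $W \subseteq \Fset^{d_0}$ such that the orthogonality graph on $W$ has second eigenvalue much smaller than its average degree, with ratio independent of $p$; natural candidates are Cayley-type graphs on $\Fset^{d_0}$ or vectors lying on a well-chosen affine quadric, whose spectra can be analyzed by character-sum / Weil-type bounds. I would then amplify by passing to the tensor power $W^{\otimes t} \subseteq \Fset^{d_0^t}$: tensor products multiply inner products, so tensors of non-self-orthogonal vectors remain non-self-orthogonal (as $p$ divides no factor norm), orthogonality in a single factor forces orthogonality of the full tensors, and the spectral gap tensorizes. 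Choosing $t$ with $d_0^t \approx d$ yields a ground set of size $|W|^t = d^{\Omega_p(1)}$ retaining the needed pseudorandomness for every $d \geq k$.

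\textbf{Containers.} The spectral bound on $W^{\otimes t}$ translates, via an expander-mixing-type computation, into a balanced supersaturation statement for $\calH$: every subset of $W^{\otimes t}$ of size meaningfully larger than $d^{\delta k/\log k}$ already contains many hyperedges of $\calH$. Feeding this supersaturation into the Balogh--Morris--Samotij / Saxton--Thomason hypergraph container theorem yields a small collection of containers of size roughly $d^{\delta k/\log k}$ that together cover every $\calH$-independent set. A standard pigeonhole / random-subset argument inside the largest container, together with a cleanup step removing any residual bad configurations, extracts the desired $\beta$-set of size at least $d^{\delta \cdot k/\log k}$. The $\log k$ factor loss in the exponent is intrinsic to this method.

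\textbf{Main obstacle.} The hardest step is the seed construction: producing an orthogonality graph in $\Fset^{d_0}$ whose pseudorandom quality is independent of the characteristic. This is precisely where~\cite{ChawinH24} incurred the $k^{1/(p-1)}$ loss, and overcoming it likely requires either a genuinely new algebraic construction --- for instance, a variety on which Deligne-type exponential-sum bounds apply uniformly in $p$ --- or a refined probabilistic/spectral analysis of an existing one. Once the seed is in place, the tensor amplification and the container argument are rather standard and should transfer the uniform-in-$p$ spectral gap to the claimed lower bound on $\beta(d,k,\Fset)$.
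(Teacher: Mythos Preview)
Your proposal misidentifies where the improvement over~\cite{ChawinH24} lies, and this leads to an architecture quite different from the paper's.

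\textbf{The seed is not the issue.} You locate the hard step in constructing a seed orthogonality graph whose spectral quality is uniform in $p$. But such a graph already exists off the shelf: the orthogonality graph $G(p,t)$ on $\Fset_p^t\setminus\{0\}$ is an $(n,d,\lambda)$-graph with $n=p^t-1$, $d=p^{t-1}-1$, $\lambda\le p^{t/2}$ (Proposition~\ref{prop:G(p,t)}), and in particular $n/d=\Theta(1)$ and $\lambda/d=\Theta(p^{-t/2})$ uniformly in $p$. No new algebraic construction or Deligne-type input is needed. The loss in~\cite{ChawinH24} came from a weaker \emph{counting} of bad configurations, not from a deficient seed.

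\textbf{The actual new ingredient.} The paper's advance for Theorem~\ref{thm:IntroBeta} is Theorem~\ref{thm:ARbipartit}: an Alon--R\"odl-style bound on the number of bi-independent pairs $(U_1,U_2)$ of size $k$ in an $(n,d,\lambda)$-graph, roughly $n^{O(s)}(\lambda n/d)^{2(k-s)}$ with $s=\Theta((n/d)\log n)$. Applied to $G(p,t)$ this gives at most $2^{O(t^2+k)}p^{tk}$ such pairs (Theorem~\ref{thm:bipartite-free}). One then runs the Alon--Szegedy random tensor scheme: pick $n$ random $m$-tuples of non-self-orthogonal vectors in $\Fset_p^t$, form their $m$-fold tensors in $\Fset_p^{t^m}$, and observe that any bad bipartite pair in the tensor space projects coordinatewise to a \emph{product} of bi-independent pairs in $G(p,t)$. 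A straight union bound over these products, with $t\approx k$ and $t^m\approx d$, kills both events $\calE_1,\calE_2$ simultaneously. No containers are used anywhere in the $\beta$ argument; containers enter only for the $\alpha(d,k,\ell,\Fset)$ statement with $\ell\ge 2$.

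\textbf{Your container step is also inverted.} Containers upper-bound the number and size of independent sets; they do not produce large ones. Saying ``a random-subset argument inside the largest container extracts the desired $\beta$-set'' is not a mechanism: the container is merely a set with \emph{few} hyperedges, and pulling an actual $\beta$-set of the right size out of it still needs a deletion/union-bound argument that you have not specified. The paper bypasses this entirely by doing the union bound directly against the explicit count of bi-independent product sets.
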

\noindent
Note that the condition $d \geq k$ in Theorem~\ref{thm:IntroBeta} is essential, in the sense that for arbitrary integers $d$ and $k$, the bound guaranteed by the theorem might exceed the number of vectors in $\Fset^d$.

Recalling that $\alpha(d,k,\Fset) \geq \beta(d,k,\Fset)$, the bound stated in Theorem~\ref{thm:IntroBeta} for $\beta(d,k,\Fset)$ holds for $\alpha(d,k,\Fset)$ as well.
The following theorem extends this implication to the quantities $\alpha(d,k,\ell,\Fset)$ for an arbitrary fixed integer $\ell \geq 1$.

\begin{theorem}\label{thm:IntroAlpha}
For every prime $p$ and every integer $\ell \geq 1$, there exists a constant $\delta = \delta(p,\ell) > 0$, such that for every field $\Fset$ of characteristic $p$ and for all integers $k \geq 2$ and $d \geq k \geq \ell$, it holds that
\[ \alpha(d,k,\ell,\Fset) \geq d^{\delta \cdot k/\log k}.\]
\end{theorem}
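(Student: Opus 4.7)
My plan is to treat the cases $\ell = 1$ and $\ell \geq 2$ separately. For $\ell = 1$, the bound is immediate from Theorem~\ref{thm:IntroBeta}: since $\alpha(d,k,1,\Fset) = \alpha(d,k,\Fset) \geq \beta(d,k,\Fset)$, we obtain $\alpha(d,k,1,\Fset) \geq d^{\delta(p) \cdot k/\log k}$. The real work is therefore extending the argument to arbitrary fixed $\ell \geq 2$.

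The natural approach for $\ell \geq 2$ is to rerun the container-method proof of Theorem~\ref{thm:IntroBeta}, but with the $(\ell+1)$-uniform hypergraph of pairwise-orthogonal tuples in place of the orthogonality graph. Let $U \subseteq \Fset^d$ be the universe of non-self-orthogonal vectors employed in the proof of Theorem~\ref{thm:IntroBeta} (typically a level set of the quadratic form $\langle v,v\rangle$, chosen so that the orthogonality graph on $U$ is spectrally pseudo-random), and let $\calH$ be the $(\ell+1)$-uniform hypergraph on $U$ whose hyperedges are the $(\ell+1)$-subsets of pairwise orthogonal vectors. Then a subset $\calA \subseteq U$ is $(k,\ell)$-nearly orthogonal precisely when the induced subhypergraph $\calH[\calA]$ has independence number at most $k$, so the task is to extract such a large $\calA$.

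The argument would proceed in four steps. First, I would estimate the number of hyperedges of $\calH$ by a character-sum computation over $\Fset$, showing that a uniformly random $(\ell+1)$-tuple from $U$ is pairwise orthogonal with probability close to the ``random'' value $p^{-\binom{\ell+1}{2}}$. Second, I would control the codegrees in $\calH$: for every $s \in \{1,\ldots,\ell\}$ and every $s$-subset of $U$, the number of hyperedges containing it should match its pseudo-random prediction. Third, I would invoke the hypergraph container theorem of Balogh--Morris--Samotij / Saxton--Thomason in its $(\ell+1)$-uniform form to obtain a small family of containers, each of size at most $|U|^{1-\eta}$, covering every independent set of $\calH$. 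Finally, I would use a random sampling argument and union bound over containers, as in the proof of Theorem~\ref{thm:IntroBeta}, to find a large $\calA \subseteq U$ meeting every container in at most $k$ vectors, hence $(k,\ell)$-nearly orthogonal, of size at least $d^{\delta(p,\ell)\cdot k/\log k}$.

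The main obstacle will be the second step: controlling the full range of $s$-codegrees in the clique hypergraph is substantially more delicate than bounding the edge count of the orthogonality graph. I expect to overcome it by iterating the spectral estimates for the orthogonality graph on $U$ already used in the proof of Theorem~\ref{thm:IntroBeta} (using, for example, that traces of powers of the adjacency operator count short closed walks, which in turn control the number of $s$-wise orthogonal configurations), combined with inclusion--exclusion over the possible overlap patterns of the $s$ fixed vectors. Once these codegree estimates are available with the correct polynomial shape, the container theorem applies with parameters depending only on $p$ and $\ell$, and the probabilistic extraction proceeds essentially as in the $\ell = 1$ case, producing a constant $\delta(p,\ell) > 0$ that degrades with $\ell$ but preserves the $k/\log k$ dependence in the exponent.
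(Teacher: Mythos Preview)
Your proposal has a genuine gap: it omits the tensor-product step, which is precisely the device that makes the parameters work --- and which is already present in the proof of Theorem~\ref{thm:IntroBeta}. That proof (via Theorem~\ref{thm:General}) does \emph{not} sample directly from a level set $U \subseteq \Fset_p^d$; it works in a small dimension $t = \Theta(k)$, counts the bad subconfigurations of $\Fset_p^t$ there, and then lifts to dimension $t^m \approx d$ by taking $m$-fold tensor products $v_1 \otimes \cdots \otimes v_m$ of non-self-orthogonal vectors in $\Fset_p^t$ (this is the Alon--Szegedy trick). For Theorem~\ref{thm:IntroAlpha} the paper does exactly the same, replacing the count of bi-independent pairs in $G(p,t)$ by a count of $K_\ell$-free subsets of $G(p,t)$; the container theorem, applied in dimension $t$, gives at most $2^{O(t^2)} = 2^{O(k^2)}$ containers, and the random choice of $n \approx p^{\Theta(mt)}$ tensor-tuples beats this union bound because each tuple lands in any fixed ``bad box'' $C^{(1)} \times \cdots \times C^{(m)}$ with probability at most $(k/p^{t-1})^m$.

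If instead you run containers directly on the $(\ell+1)$-clique hypergraph in dimension $d$, the container family has size $2^{\Theta(\log^2 |U|)} = 2^{\Theta(d^2)}$ (this is unavoidable with the version of containers used here: $f_0 = \Theta(\log |U|) = \Theta(d)$ and $\log(e(H)/e_0) = \Theta(\log(|U|/\lambda)) = \Theta(d)$). Your union bound in step four then requires roughly $2^{\Theta(d^2)} \cdot N^{k} \cdot p^{-\Theta(dk)} < 1$, i.e., $N < p^{\Theta(d) - \Theta(d^2/k)}$, which is vacuous once $k$ is noticeably smaller than $d$ --- and the theorem must cover, say, $k=2$ with $d$ arbitrary. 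The tensor reduction to $t = \Theta(k)$ is exactly what decouples $k$ from $d$ and rescues the union bound.

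A smaller point: your ``main obstacle'' is not one. The paper applies the container theorem using only the trivial codegree bound $d^{(j)}(v) \leq u^{\ell-j}$; what is actually needed is the hyperedge count $e(H[U]) = \Theta(u^\ell)$ for every not-too-small $U$, and this follows directly from Alon's copy-counting theorem for $(n,d,\lambda)$-graphs (Theorem~\ref{thm:AlonSubgraph}), with no character sums or inclusion--exclusion required.
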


We remark that the bounds in Theorems~\ref{thm:IntroBeta} and~\ref{thm:IntroAlpha} are meaningful for sufficiently large values of $k$, because constant values of $k$ can be handled by an appropriate choice of the constant $\delta$, using a trivial lower bound of $d$.
We further note that the bounds in both theorems depend on the characteristic $p$ of the field $\Fset$ rather than on its size. It thus suffices to consider in their proofs the finite field $\Fset_p$ of order $p$, which forms a sub-field of any field of characteristic $p$.

The proofs of Theorems~\ref{thm:IntroBeta} and~\ref{thm:IntroAlpha} rely on the probabilistic approach of Alon and Szegedy~\cite{AlonS99} in their construction of large nearly orthogonal sets over the reals (see also~\cite{Balla23,ChawinH24}).
The main novel ingredients, compared to~\cite{ChawinH24} and potentially of independent interest, are estimations for the number of subgraphs of certain types in pseudo-random graphs (specifically, regular graphs with a large spectral gap).
For Theorem~\ref{thm:IntroBeta}, we prove an upper bound on the number of bounded-size bi-independent sets, i.e., pairs of sets of vertices with no edge connecting a vertex of one set to a vertex of the other (see Theorem~\ref{thm:ARbipartit}). The proof of this result adapts a technique of Alon and R{\"{o}}dl~\cite{AlonR05} for counting independent sets in pseudo-random graphs.
For Theorem~\ref{thm:IntroAlpha}, we prove an upper bound on the number of bounded-size subgraphs that contain no copy of the complete graph of some fixed order (see Theorem~\ref{thm:F-free}). The proof incorporates the hypergraph container method, developed independently by Balogh, Morris, and Samotij~\cite{BaloghMS15} and by Saxton and Thomason~\cite{SaxtonT2015}, and a result of Alon on the number of copies of a fixed graph in pseudo-random graphs (see~\cite{Pseudo06}).
To establish Theorems~\ref{thm:IntroBeta} and~\ref{thm:IntroAlpha}, we apply these results to an appropriate family of graphs, termed orthogonality graphs and studied in~\cite{AlonK97,Vinh08a}, and combine the obtained bounds with the technique of~\cite{AlonS99}.
In fact, for convenience of presentation, we prove the existence of a set of vectors that simultaneously yields the bounds stated in both theorems (see Theorem~\ref{thm:General} and the subsequent paragraph).

We finally mention that our results provide, for every field $\Fset$ of finite characteristic, $k$-nearly orthogonal sets over $\Fset$ whose size is optimal up to the $\log k$ term in the exponent.
As noted earlier, over $\R$ there is a more substantial gap between the known lower and upper bounds.
It would be interesting to narrow the gaps in both cases.

\section{Counting Subgraphs of Pseudo-random Graphs}\label{sec:2}

In this section, we prove our results on counting subgraphs of pseudo-random graphs.
We start with a brief introduction to the concept of $(n,d,\lambda)$-graphs.

\subsection{Pseudo-random Graphs}

An $(n,d,\lambda)$-graph is a $d$-regular graph on $n$ vertices, such that the absolute value of every eigenvalue of its adjacency matrix, besides the largest one, is at most $\lambda$. Throughout the paper, the graphs may have loops, at most one at each vertex, where a loop contributes $1$ to the degree of its vertex.
It is well known that $(n,d,\lambda)$-graphs with $\lambda$ significantly smaller than $d$ enjoy strong pseudo-random properties and behave, in various senses, like a random graph with $n$ vertices and edge probability $d/n$. For a thorough introduction to the topic, the reader is referred to~\cite{Pseudo06}.

We state below two results on $(n,d,\lambda)$-graphs.
The first is the following lemma given in~\cite{AlonR05}.

\begin{lemma}[{\cite[Lemma~2.2]{AlonR05}}]\label{lemma:N(u)}
Let $G=(V,E)$ be an $(n,d,\lambda)$-graph, and let $B \subseteq V$ be a set of vertices.
Define
\[C = \Big \{ u \in V ~\Big{|}~ |N(u) \cap B| \leq \frac{d}{2n} \cdot |B| \Big \},\] where $N(u)$ denotes the set of neighbors of $u$ in $G$ (including $u$ itself, if there is a loop at $u$). Then
\[|B| \cdot |C| \leq  \Big (\frac{2 \lambda n}{d} \Big )^2.\]
\end{lemma}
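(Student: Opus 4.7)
The plan is to deduce this lemma from the \emph{expander mixing lemma}, which is the standard way to convert the spectral gap hypothesis of an $(n,d,\lambda)$-graph into a statement about edge distribution. Recall that for an $(n,d,\lambda)$-graph $G = (V,E)$ and any two (not necessarily disjoint) vertex subsets $S, T \subseteq V$, one has
\begin{equation*}
\left| e(S,T) - \frac{d \cdot |S| \cdot |T|}{n} \right| \leq \lambda \sqrt{|S| \cdot |T|},
\end{equation*}
where $e(S,T)$ counts pairs $(s,t) \in S \times T$ with $\{s,t\}$ an edge, adopting the paper's convention that a loop at $v$ contributes $1$ to the degree of $v$ and appears in $N(v)$. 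This inequality follows by writing the adjacency matrix of $G$ in an orthonormal eigenbasis, separating the contribution of the Perron eigenvector (which gives the main term $d \cdot |S| \cdot |T| / n$), and bounding the contribution of the remaining eigenvectors by $\lambda$ times a Cauchy--Schwarz estimate on $\|\mathbf{1}_S\| \cdot \|\mathbf{1}_T\|$.

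Having set this up, I would apply the inequality with $S = B$ and $T = C$. The definition of $C$ gives $|N(u) \cap B| \leq \frac{d \cdot |B|}{2n}$ for every $u \in C$, so summing over $u \in C$ yields the upper bound $e(B,C) \leq \frac{d \cdot |B| \cdot |C|}{2n}$, which is at most half of the ``expected'' value $\frac{d \cdot |B| \cdot |C|}{n}$ appearing in the expander mixing lemma. Plugging this into the one-sided inequality
\begin{equation*}
\frac{d \cdot |B| \cdot |C|}{n} - e(B,C) \leq \lambda \sqrt{|B| \cdot |C|}
\end{equation*}
gives $\frac{d \cdot |B| \cdot |C|}{2n} \leq \lambda \sqrt{|B| \cdot |C|}$, and squaring and rearranging produces the desired bound $|B| \cdot |C| \leq \left( \frac{2\lambda n}{d} \right)^{2}$.

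There is no real obstacle here: the expander mixing lemma does all the spectral work, and the definition of $C$ is tailor-made to produce a factor of $1/2$ of the expected edge count, which is exactly what is needed to obtain a clean squared bound. The only bookkeeping point worth verifying is that the convention for counting $e(B,C)$ (with loops contributing $1$ and with edges inside $B \cap C$ counted twice) matches the convention for $N(u)$ used in the hypothesis, so that the sum $\sum_{u \in C} |N(u) \cap B|$ genuinely equals $e(B,C)$; once this is confirmed, the proof is essentially three lines.
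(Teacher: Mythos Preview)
Your argument is correct and is exactly the standard proof (and indeed the proof in the cited source~\cite{AlonR05}): apply the expander mixing lemma to the pair $(B,C)$, use the defining inequality of $C$ to bound $e(B,C) \leq \frac{d}{2n}|B||C|$, and rearrange. Note that the present paper does not supply its own proof of this lemma at all---it is merely quoted from~\cite{AlonR05}---so there is nothing further to compare.
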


The second result that we state here was proved by Alon (see~\cite{Pseudo06}).
Here, for a graph $F$, we denote its maximum degree by $\Delta(F)$, its automorphism group by $\mathrm{Aut}(F)$, and the number of its edges by $e(F)$.
For a graph $G$ and a subset $U$ of its vertex set, $G[U]$ stands for the subgraph of $G$ induced by $U$.

\begin{theorem}[{\cite[Theorem~4.10]{Pseudo06}}]\label{thm:AlonSubgraph}
Let $G = (V,E)$ be an $(n,d,\lambda)$-graph with, say, $d \leq 0.9 \cdot n$.
Let $F$ be a fixed graph on $\ell$ vertices, and let $u \leq n$ satisfy $u = \omega(\lambda \cdot (\frac{n}{d})^{\Delta(F)})$.
Then, for every set $U \subseteq V$ of size $u$, the number of (not necessarily induced) copies of $F$ in $G[U]$ is $(1+o(1)) \cdot \frac{u^{\ell}}{|\mathrm{Aut}(F)|} \cdot (\frac{d}{n})^{e(F)}$.
\end{theorem}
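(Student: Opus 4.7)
The plan is to count labeled (injective) homomorphisms from $F$ into $G[U]$ and then divide by $|\mathrm{Aut}(F)|$, using the fact that each copy of $F$ in $G[U]$ corresponds to exactly $|\mathrm{Aut}(F)|$ labeled embeddings. Fix an arbitrary ordering $v_1, \ldots, v_\ell$ of the vertex set $V(F)$, and for each $i$ let $f_i$ denote the number of neighbors of $v_i$ among $\{v_1, \ldots, v_{i-1}\}$, so that $f_1 = 0$, every $f_i \leq \Delta(F)$, and $\sum_{i=1}^{\ell} f_i = e(F)$. A labeled embedding is then an injective tuple $(w_1, \ldots, w_\ell) \in U^{\ell}$ satisfying $w_i w_j \in E(G)$ whenever $v_i v_j \in E(F)$, and the count of such tuples decomposes as a nested sum in which the range of $w_i$ is the common neighborhood $U \cap \bigcap_{j < i,\, v_j v_i \in E(F)} N(w_j)$.

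The core of the argument is a vertex-by-vertex embedding analysis: I would prove inductively that for all but a negligible fraction of partial tuples $(w_1, \ldots, w_{i-1})$, the step-$i$ common neighborhood has size $(1+o(1)) \cdot u \cdot (d/n)^{f_i}$. This would follow by iterating Lemma~\ref{lemma:N(u)} together with its two-sided counterpart (which is obtained from a standard variance or expander mixing argument on the regular graph $G$): starting from $U$ and successively intersecting with the neighborhoods of the $f_i$ already-placed neighbors of $v_i$, the set shrinks by a factor close to $d/n$ at each step for ``typical'' previous choices, so that the final common neighborhood is concentrated around $u \cdot (d/n)^{f_i}$.

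The main obstacle is controlling how errors accumulate across the $\ell$ embedding steps. At each step $i$ I would separate the partial tuples into ``good'' ones, for which every prefix common neighborhood has the expected size up to a $(1 \pm \epsilon)$ factor, and ``bad'' ones; the iterated spectral bound caps the number of bad tuples, while the good tuples contribute the main term $u \cdot (d/n)^{f_i}$ to the next extension. The hypothesis $u = \omega(\lambda \cdot (n/d)^{\Delta(F)})$ is tuned precisely so that even after the common neighborhood has shrunk to its smallest scale $u \cdot (d/n)^{\Delta(F)}$, the associated spectral error of order $\lambda$ is still a $o(1)$ fraction of that size, and the trivial upper bound of $u^{\ell - i + 1}$ on completions of a bad tuple cannot destroy the estimate. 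Since $\ell$ is a constant and each step contributes a $(1+o(1))$ multiplicative factor, the errors compose to an overall $(1+o(1))$ factor; dividing the resulting count of labeled embeddings by $|\mathrm{Aut}(F)|$ then yields the claimed asymptotic formula.
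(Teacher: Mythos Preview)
The paper does not prove Theorem~\ref{thm:AlonSubgraph}; it is quoted verbatim as \cite[Theorem~4.10]{Pseudo06} and used as a black box in the proof of Theorem~\ref{thm:F-free}. There is therefore no ``paper's own proof'' to compare against.

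That said, your sketch is essentially the argument given in the cited source: order the vertices of $F$, embed them one at a time into $U$, and use the expander mixing lemma (the two-sided strengthening of Lemma~\ref{lemma:N(u)}) to show that for all but a negligible set of partial embeddings the common neighborhood at step $i$ has size $(1+o(1)) \cdot u \cdot (d/n)^{f_i}$. The hypothesis $u = \omega(\lambda \cdot (n/d)^{\Delta(F)})$ is exactly what keeps the spectral error term $o(1)$ at every step, and since $\ell$ is fixed the multiplicative errors combine to a single $(1+o(1))$. Dividing by $|\mathrm{Aut}(F)|$ passes from labeled embeddings to unlabeled copies. Your outline is correct and matches the standard proof.
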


\begin{remark}\label{remark:G_n}
Strictly speaking, $G$ represents in Theorem~\ref{thm:AlonSubgraph} an infinite sequence $(G_n)$ of graphs, where $G_n$ has $n$ vertices for each $n$, and the $o(\cdot)$ and $\omega(\cdot)$ notations are used with respect to $n$ that tends to infinity. The same convention will be used in Theorem~\ref{thm:F-free}.
\end{remark}

\subsection{Bi-independent Sets}

We prove the following bipartite analogue of a result of Alon and R{\"{o}}dl~\cite{AlonR05}, established through a similar argument.

\begin{theorem}\label{thm:ARbipartit}
Let $G=(V,E)$ be an $(n,d,\lambda)$-graph, and let $s = \frac{2n \log n}{d}$.
Then for every integer $k \geq s$, the number of pairs $(U_1,U_2)$ of (not necessarily disjoint) subsets of $V$ with $|U_1| = |U_2| = k$, such that no edge of $G$ connects a vertex of $U_1$ to a vertex of $U_2$, is at most
\[\frac{1}{k!} \cdot n^{2s} \cdot \Big ( \frac{2 \lambda n}{d} \Big)^{2 \cdot (k-s)}.\]
\end{theorem}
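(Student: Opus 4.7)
The plan is to adapt the argument of Alon and R\"odl~\cite{AlonR05} for counting independent sets in pseudo-random graphs to the bipartite setting of this theorem. The central tool is Lemma~\ref{lemma:N(u)}: when applied to any $B \subseteq V$ of size $s$, the set $C$ defined there---which contains every vertex having no neighbors in $B$---has size at most $(2\lambda n/d)^2/s$. Thus, given a fixed seed $B$ of size $s$, there are few possible vertices that can extend it to a larger set with no edges to $B$.

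For each valid pair $(U_1, U_2)$, I would first distinguish ``seeds'' $B_1 \subseteq U_1$ and $B_2 \subseteq U_2$, each of size $s$. Since no edges connect $U_1$ to $U_2$, every vertex of $U_1 \setminus B_1$ has no neighbors in $B_2$ and hence lies in the corresponding $C$-set of size at most $(2\lambda n/d)^2/s$ given by Lemma~\ref{lemma:N(u)}; the symmetric statement holds for $U_2 \setminus B_2$. I would then count tuples $(B_1, B_2, U_1 \setminus B_1, U_2 \setminus B_2)$ with ordered seeds, obtaining at most $n^{2s} \cdot \binom{(2\lambda n/d)^2/s}{k-s}^2$ configurations, and divide by the $(k!/(k-s)!)^2$ orderings of the seeds within a given unordered pair to turn this into a bound on the number of valid pairs.

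The final step is to simplify the resulting estimate, using the standard inequality $\binom{a}{b} \leq a^b/b!$, the identity $s = 2n \log n/d$, and routine factorial manipulations, to reach the form $\frac{1}{k!} \cdot n^{2s} \cdot (2\lambda n/d)^{2(k-s)}$ stated in the theorem. The main obstacle is the careful combinatorial bookkeeping in this simplification: the estimate coming out of Lemma~\ref{lemma:N(u)} naturally carries the factor $(2\lambda n/d)^2/s$ rather than $2\lambda n/d$, so the surplus factor of $s^{2(k-s)}$ must be absorbed against the factorial overcounts---this is where the specific choice of $s = 2n \log n/d$ plays its role in producing the clean final form.
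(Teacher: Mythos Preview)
There is a genuine gap: your static-seed count produces the wrong exponent on $2\lambda n/d$, and the discrepancy cannot be absorbed by factorials. Carrying out your computation, with $A = (2\lambda n/d)^2/s$, gives
\[
\frac{n^{2s}\,\binom{A}{k-s}^2}{\bigl(k!/(k-s)!\bigr)^2}
\;\le\;
\frac{n^{2s}}{(k!)^2}\cdot A^{2(k-s)}
\;=\;
\frac{n^{2s}}{(k!)^2}\cdot\frac{(2\lambda n/d)^{4(k-s)}}{s^{2(k-s)}}.
\]
Compared with the claimed bound $\tfrac{1}{k!}\,n^{2s}\,(2\lambda n/d)^{2(k-s)}$, this is too large by the factor $\bigl((2\lambda n/d)/s\bigr)^{2(k-s)}/k!$, which in the intended application (the orthogonality graph, where $2\lambda n/d = \Theta(p^{t/2})$ while $s = \Theta(t)$) is enormous. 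The surplus you name is not $s^{2(k-s)}$ but $((2\lambda n/d)/s)^{2(k-s)}$, and the particular value $s = 2n\log n/d$ does nothing to neutralise it. In effect, your scheme pays $(2\lambda n/d)^2/s$ for each of $2(k-s)$ individual vertices, whereas the theorem requires paying $(2\lambda n/d)^2$ for each of $k-s$ \emph{pairs}.

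The paper's argument achieves this by choosing the two sets \emph{sequentially} as pairs $(u_{i+1},v_{i+1})$. One maintains the shrinking set $B_i$ of vertices with no neighbour in $\{u_1,\dots,u_i\}$; then $v_{i+1}$ must lie in $B_i$, and whenever $u_{i+1}$ lies in the associated set $C_i$ of Lemma~\ref{lemma:N(u)}, the \emph{product} bound $|B_i|\cdot|C_i|\le(2\lambda n/d)^2$ controls the number of choices for the pair. Each step with $u_{i+1}\notin C_i$ shrinks $B_i$ by the factor $(1-d/(2n))$, so there can be at most $s = 2n\log n/d$ such steps before $B_i$ is empty; this is where the specific value of $s$ actually enters. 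The resulting count is $\binom{k}{s}\,n^{2s}\,(2\lambda n/d)^{2(k-s)}$ for ordered sequences, and division by $(k!)^2$ yields the stated bound. The pairing of $u$-choices with $v$-choices, together with the dynamic shrinkage of $B_i$, is precisely what your fixed-seed approach lacks.
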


\begin{proof}
Consider the sequences $u_1, v_1, u_2, v_2, \ldots, u_k, v_k$ of $2k$ vertices of $G$, such that the vertices $u_1, \ldots, u_k$ are distinct, the vertices $v_1, \ldots, v_k$ are distinct, and no edge of $G$ connects a vertex of $\{u_1, \ldots, u_k\}$ to a vertex of $\{v_1, \ldots, v_k\}$.
Such a sequence can be chosen in $k$ iterations, where the $i$th iteration, $0 \leq i <k$, is dedicated to choosing $u_{i+1}$ and $v_{i+1}$.
Let $B_0 = V$, and for each $i \in [k-1]$, let $B_i$ denote the set of all vertices of $G$ that are not adjacent to any of the vertices of $\{u_1, \ldots, u_i\}$.
Note that $B_i = B_{i-1} \setminus N(u_i) \subseteq B_{i-1}$.
We further define
\[C_i = \Big \{u \in V ~\Big{|}~ |N(u) \cap B_i| \leq \frac{d}{2n} \cdot |B_i| \Big \}\]
and apply Lemma~\ref{lemma:N(u)} to obtain that $|B_i| \cdot |C_i| \leq (\frac{2 \lambda n}{d})^2$.

Suppose that we have already chosen the first $2i$ vertices $u_1, v_1, \ldots, u_i, v_i$, and consider the choice of $u_{i+1}$ and $v_{i+1}$.
Since $v_{i+1}$ is not allowed to be adjacent to the vertices of $\{u_1, \ldots, u_i\}$, it must be chosen from $B_i$.
Further, if $u_{i+1}$ is not chosen from $C_i$, then $|N(u_{i+1}) \cap B_i| > \frac{d}{2n} \cdot |B_i|$, and thus $|B_{i+1}| < (1-\frac{d}{2n}) \cdot |B_i|$.
Therefore, for at most $s = \frac{2n \log n}{d}$ of the indices $i$, it holds that $u_{i+1} \notin C_i$.
On the other hand, if $u_{i+1}$ is chosen from $C_i$, then the number of possibilities to choose $u_{i+1}$ and $v_{i+1}$ is at most $|B_i| \cdot |C_i| \leq (\frac{2 \lambda n}{d})^2$.
It thus follows that the number of ways to choose the sequence $u_1, v_1, \ldots, u_k, v_k$ does not exceed
\[ \binom{k}{s} \cdot n^{2s} \cdot \Big ( \frac{2 \lambda n}{d}\Big )^{2 \cdot (k-s)}.\]
Indeed, there are $\binom{k}{s}$ ways to choose $s$ indices covering all the indices $i$ with $u_{i+1} \notin C_i$, and for each such index, there are at most $n^2$ ways to choose $u_{i+1}$ and $v_{i+1}$. As shown above, for each of the remaining $k-s$ indices $i$, there are at most $(\frac{2 \lambda n}{d})^2$ ways to choose $u_{i+1}$ and $v_{i+1}$.
We finally divide the obtained bound by $(k !)^2$, to avoid counting the permutations of the vertices of $\{u_1, \ldots, u_k\}$ and of $\{v_1, \ldots, v_k\}$. This yields the desired bound and completes the proof.
\end{proof}

We derive the following corollary.

\begin{corollary}\label{cor:ARbipartite<=}
Let $G=(V,E)$ be an $(n,d,\lambda)$-graph, and let $s = \frac{2n \log n}{d}$.
Then for every integer $k$, the number of pairs $(U_1,U_2)$ of (not necessarily disjoint) subsets of $V$ with $|U_1| \leq k$ and $|U_2| \leq k$, such that no edge of $G$ connects a vertex of $U_1$ to a vertex of $U_2$, is at most
\[(k+1)^2 \cdot \max \bigg ( n, \Big (\frac{2 \lambda n}{d} \Big )^2 \bigg )^{s+k}.\]
\end{corollary}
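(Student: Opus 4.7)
The plan is to partition the count by the ordered pair of sizes $(|U_1|, |U_2|) = (i, j)$ with $0 \le i, j \le k$, and to show that each of the $(k+1)^2$ size classes contributes at most $M^{s+k}$ bi-independent pairs, where $M = \max(n, (2\lambda n/d)^2)$. Summing over the size classes then yields the corollary. For convenience I assume without loss of generality that $i \le j$.

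When $j = \max(i,j) < s$, the trivial bound $\binom{n}{i}\binom{n}{j} \le n^{i+j}$ suffices: the elementary inequality $i + j \le 2\min(s-1, k) \le (s-1) + k$ gives $n^{i+j} \le n^{s+k-1} \le M^{s+k}$, as desired.

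When $j \ge s$, I would adapt the iterative construction in the proof of Theorem~\ref{thm:ARbipartit}. In place of the length-$2k$ interleaved sequences used there, I would consider sequences $u_1, v_1, \ldots, u_i, v_i, v_{i+1}, \ldots, v_j$ subject to the same distinctness and non-adjacency conditions, where the $u$-vertices are only selected during the first $i$ rounds, and the last $j - i$ rounds choose only $v$-vertices from the fixed set $B_i$ of vertices non-adjacent to $U_1$. The first $i$ rounds are then controlled exactly as in the proof of Theorem~\ref{thm:ARbipartit} via Lemma~\ref{lemma:N(u)}: if $i \ge s$, at most $s$ of them can be ``bad'' (that is, have $u_t \notin C_{t-1}$), which yields a bound of $\binom{i}{s} \cdot n^{2s} \cdot (2\lambda n/d)^{2(i-s)}$ on the number of such partial sequences, while if $i < s$ the trivial bound $n^{2i}$ is used instead. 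The last $j - i$ rounds contribute an extra multiplicative factor of $|B_i|^{j-i} \le n^{j-i}$. After dividing by $i! \cdot j!$ to account for the orderings of $U_1$ and $U_2$, a short comparison of the exponents of $n$ and of $(2\lambda n/d)^2$ against $M$ yields the required bound $M^{s+j} \le M^{s+k}$ in each sub-regime.

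The only subtlety is the bookkeeping in the mixed sub-regime $i < s \le j$, in which the first $i$ rounds save no factor of $(2\lambda n/d)^2$ over the trivial $n^{2i}$; here the bound must rely solely on $i \le s-1$ and $j \le k$ to conclude that the exponent of $n$ does not exceed $s + k - 1$. In the $i \ge s$ case, one also needs to observe that the binomial factor $\binom{i}{s}$ produced by the good/bad-index split is absorbed by the factorial denominator $i!$ coming from the de-ordering step.
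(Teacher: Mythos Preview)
Your proposal is correct and follows essentially the same approach as the paper: partition by size class, use the Alon--R\"odl--type count for the symmetric part, and bound the excess vertices trivially by powers of $n$. The paper's execution is a bit slicker, however: it splits on whether the \emph{smaller} size $k_1$ is below $s$ (so the mixed sub-regime you single out never arises), and in the case $k_1 \ge s$ it simply invokes Theorem~\ref{thm:ARbipartit} as a black box with parameter $k_1$ and multiplies by $n^{k_2-k_1}$, rather than reopening the iterative argument.
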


\begin{proof}
For a given integer $k$ and for arbitrary integers $0 \leq k_1, k_2 \leq k$, consider the pairs $(U_1,U_2)$ of subsets of $V$ with $|U_1| = k_1$ and $|U_2| = k_2$, such that no edge of $G$ connects a vertex of $U_1$ to a vertex of $U_2$.
Suppose without loss of generality that $k_1 \leq k_2$.
If $k_1 < s$, then the number of these pairs is clearly bounded by $n^{k_1+k_2} < n^{s+k}$.
Otherwise, by Theorem~\ref{thm:ARbipartit}, there are at most
\[n^{2s} \cdot \Big (\frac{2 \lambda n}{d} \Big )^{2 \cdot (k_1-s)}\]
ways to choose $k_1$ vertices for each of $U_1$ and $U_2$, and there are at most $n^{k_2-k_1}$ ways to choose additional $k_2-k_1$ vertices for $U_2$. Therefore, the number of pairs in this case does not exceed
\[n^{2s} \cdot \Big (\frac{2 \lambda n}{d} \Big )^{2 \cdot (k_1-s)} \cdot n^{k_2-k_1} \leq \max \bigg ( n, \Big (\frac{2 \lambda n}{d} \Big )^2 \bigg )^{2s+(k_1-s)+(k_2-k_1)} \leq \max \bigg ( n, \Big (\frac{2 \lambda n}{d} \Big )^2 \bigg )^{s+k}.\]
The proof is concluded by considering all the possible values of the integers $k_1$ and $k_2$.
\end{proof}

\subsection{Subgraphs Free of Small Complete Graphs}

We prove the following theorem (see Remark~\ref{remark:G_n}).
As usual, $K_\ell$ denotes the complete graph of order $\ell$.

\begin{theorem}\label{thm:F-free}
Let $G=(V,E)$ be an $(n,d,\lambda)$-graph with, say, $d \leq 0.9 \cdot n$.
Suppose that $n = \Theta(d)$ and $n = \omega(\lambda)$.
Then, for every fixed integer $\ell \geq 2$ there exists a constant $c$, such that for all integers $k \leq n$, the number of sets $U \subseteq V$ of size at most $k$ for which $G[U]$ contains no copy of $K_\ell$ is at most
\[2^{c \cdot \log n \cdot \log(\frac{n}{\lambda})} \cdot \lambda^k.\]
\end{theorem}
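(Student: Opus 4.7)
The plan is to model $K_\ell$-free subsets of $V$ as independent sets in the $\ell$-uniform hypergraph $\mathcal{H}$ whose vertex set is $V$ and whose edges are the vertex sets of copies of $K_\ell$ in $G$, and then to apply the hypergraph container method iteratively. It is the iteration itself, combined with Theorem~\ref{thm:AlonSubgraph}, that yields both of the factors appearing in the claimed bound: the multiplicative $2^{c\log n \log(n/\lambda)}$ comes from the number of containers accumulated over the rounds, while the $\lambda^k$ comes from counting $k$-subsets of a container of size $O(\lambda)$.

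The first step is to verify the co-degree conditions needed to apply a container theorem (in either the Balogh--Morris--Samotij or Saxton--Thomason form). For a $j$-subset $S \subseteq V$ with $1 \le j \le \ell-1$, the number of edges of $\mathcal{H}$ through $S$ equals the number of copies of $K_{\ell-j}$ in the induced subgraph on the common neighborhood of $S$. Standard pseudo-random estimates (expander mixing, Alon--Chung-type bounds) give the expected size and regularity of such common neighborhoods, and then Theorem~\ref{thm:AlonSubgraph}, applied with $F = K_{\ell-j}$, provides the matching count of $K_{\ell-j}$-copies. Put together, these yield co-degree bounds of the form $\Delta_j(\mathcal{H}) \le O\bigl(n^{\ell-j}(d/n)^{\binom{\ell}{2}-\binom{j}{2}}\bigr)$, which are exactly what the container theorem requires.

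The second step is to apply the container theorem to $\mathcal{H}$ and iterate. One application produces a family $\mathcal{C}_1$ of containers, each of size at most $(1-\varepsilon)n$, with $|\mathcal{C}_1| \le 2^{O(\log n)}$, such that every independent set of $\mathcal{H}$ is contained in some $C \in \mathcal{C}_1$. As long as a container $C$ still has size $\omega(\lambda)$, the hypotheses of Theorem~\ref{thm:AlonSubgraph} remain valid on $G[C]$ (since $d = \Theta(n)$, the cutoff $u = \omega(\lambda \cdot (n/d)^{\Delta(K_\ell)})$ is $\omega(\lambda)$), so the same co-degree estimates persist within $\mathcal{H}[C]$, and the container theorem can be reapplied. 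After $t = O(\log(n/\lambda))$ rounds we arrive at a family $\mathcal{C}$ of containers each of size $O(\lambda)$, with
\[ |\mathcal{C}| \le 2^{O(\log n) \cdot O(\log(n/\lambda))} = 2^{c_1 \log n \log(n/\lambda)}. \]

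The third step is a direct count: every $K_\ell$-free subset $U \subseteq V$ of size at most $k$ lies in some container $C \in \mathcal{C}$ and contributes a subset of size $\le k$ of $C$. Since $|C| = O(\lambda)$, the number of such subsets is at most $\sum_{j \le k} \binom{|C|}{j} \le \lambda^{k} \cdot n^{O(1)}$, where the $n^{O(1)}$ accounts for the small-$k$ regime; this polylogarithmic slack is absorbed into the $2^{c \log n \log(n/\lambda)}$ factor. Multiplying by $|\mathcal{C}|$ completes the bound. The main obstacle I anticipate is the bookkeeping: verifying the precise form of the container hypotheses from Theorem~\ref{thm:AlonSubgraph}, tracking how the co-degree estimates transfer under restriction to successive containers (which requires knowing that the containers themselves are ``spread out'' enough for pseudo-randomness to be inherited), and choosing the stopping threshold so that the iteration halts exactly when the induced hypergraph stops having the supersaturation needed by the container theorem.
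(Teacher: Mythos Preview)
Your approach is essentially the paper's: both define the $\ell$-uniform hypergraph $\mathcal{H}$ of $K_\ell$-copies in $G$, apply the container method, and use Theorem~\ref{thm:AlonSubgraph} to translate an edge-count condition on a container into a bound on its size. Two differences are worth noting. First, the paper invokes a packaged iterated container theorem of Saxton and Thomason (Theorem~\ref{thm:container}) in a single shot rather than iterating the basic lemma by hand; the built-in iteration already yields $\log|\calC| \le O(\log n \cdot \log(n/\lambda))$ and containers $C$ with $e(\mathcal{H}[C]) \le e_0$ for $e_0 = \lambda^\ell \log(n/\lambda)$, whence $|C| = O(\lambda \cdot \log^{1/\ell}(n/\lambda))$ by Theorem~\ref{thm:AlonSubgraph}. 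Second, and this dissolves the obstacle you flagged, the refined co-degree bound $\Delta_j = O\bigl(n^{\ell-j}(d/n)^{\binom{\ell}{2}-\binom{j}{2}}\bigr)$ is unnecessary: since $n=\Theta(d)$, the power of $d/n$ is just a constant, so your estimate coincides with the trivial bound $d^{(j)}(v)\le u^{\ell-j}$ inside any set $U$ of size $u$. The paper simply uses this trivial bound, which holds for every $U$ without any pseudo-randomness inherited by containers; pseudo-randomness enters only to guarantee $e(\mathcal{H}[U])=\Theta(u^\ell)$ whenever $u=\omega(\lambda)$, which is exactly what Theorem~\ref{thm:AlonSubgraph} gives for arbitrary subsets $U\subseteq V$.
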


\paragraph*{The Container Method.}

In what follows, we present a statement of the container method, as given by Saxton and Thomason in~\cite{SaxtonT16}.
We start with some notations.
For an integer $\ell \geq 2$, let $H$ be an $\ell$-uniform hypergraph on the vertex set $V$.
Let $P(V)$ denote the power set of $V$, and let $e(H)$ denote the number of hyperedges in $H$.
For a set $U \subseteq V$, let $H[U]$ denote the sub-hypergraph of $H$ induced by $U$.
The set $U$ is called an independent set of $H$ if $e(H[U])=0$.
For a set $\sigma \subseteq V$ of size $|\sigma| \leq \ell$, let $d(\sigma)$ denote the number of hyperedges in $H$ that contain $\sigma$.
For each $2 \leq j \leq \ell$ and for every vertex $v \in V$, let $d^{(j)}(v)$ denote the maximum of $d(\sigma)$ over all sets $\sigma \subseteq V$ with $|\sigma| = j$ and $v \in \sigma$. It clearly holds that $d^{(j)}(v) \leq |V|^{\ell-j}$.
For each $2 \leq j \leq \ell$ and for any real $\tau > 0$, we define $\delta_j(H,\tau) = \frac{1}{\tau^{j-1} \cdot \ell \cdot e(H)} \cdot \sum_{v \in V}{d^{(j)}(v)}$ and $\delta(H,\tau) = 2^{\binom{\ell}{2}-1} \cdot \sum_{j=2}^{\ell}{2^{-\binom{j-1}{2}} \cdot \delta_j(H,\tau)}$.

The following theorem forms a simplified version of~\cite[Theorem~5.1]{SaxtonT16}.

\begin{theorem}[\cite{SaxtonT16}]\label{thm:container}
For a fixed integer $\ell \geq 2$, let $H$ be an $\ell$-uniform hypergraph on the vertex set $V$, and let $e_0$ be an integer satisfying $e_0 \leq e(H)$.
Let $\tau: P(V) \rightarrow \R^+$ be a function such that for every set $U \subseteq V$ with $e(H[U]) \geq e_0$, it holds that
\[\tau(U) < \frac{1}{2}~~~~\mbox{ and }~~~~\delta(H[U],\tau(U)) \leq \frac{1}{12 \cdot \ell !}.\]
Define
\[f_0 = \max \{ -|U| \cdot \tau(U) \cdot \log \tau(U) \mid U \subseteq V,~ e(H[U]) \geq e_0 \}.\]
Then there exists a collection $\calC \subseteq P(V)$, such that
\begin{enumerate}
  \item every independent set of $H$ is contained in some set of $\calC$,
  \item $e(H[C]) \leq e_0$ for each $C \in \calC$, and
  \item $\log |\calC| \leq O(f_0 \cdot \log (\frac{e(H)}{e_0}))$.
\end{enumerate}
\end{theorem}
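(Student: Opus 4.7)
The plan is to apply the hypergraph container method (Theorem~\ref{thm:container}) to the $\ell$-uniform hypergraph $H$ on vertex set $V$ whose hyperedges are the vertex sets of the copies of $K_\ell$ in $G$. Since $U \subseteq V$ makes $G[U]$ free of $K_\ell$ exactly when $U$ is an independent set of $H$, it suffices to bound the number of independent sets of $H$ of size at most $k$. The pseudo-random input to the method comes from Theorem~\ref{thm:AlonSubgraph} applied with $F = K_\ell$: the hypothesis $n = \Theta(d)$ makes $(n/d)^{\Delta(K_\ell)}$ a constant depending only on $\ell$, so the theorem yields $e(H[U]) = \Theta(|U|^\ell)$ for every $U \subseteq V$ with $|U| = \omega(\lambda)$. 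The codegree bounds $d^{(j)}(v) \le |U|^{\ell-j}$ in $H[U]$, for every $v \in U$ and $2 \le j \le \ell$, hold trivially, since any $K_\ell$ in $G[U]$ that extends a fixed $j$-clique is determined by its remaining $\ell - j$ vertices in $U$.

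Next, I would invoke Theorem~\ref{thm:container} with $e_0 = C_1 \lambda^\ell$ and $\tau(U) = C_2/|U|$ for sufficiently large constants $C_1 = C_1(\ell)$ and $C_2 = C_2(\ell)$. The bound $e(H[U]) \le \binom{|U|}{\ell}$ combined with $e(H[U]) \ge e_0$ forces $|U| \ge (\ell!\,C_1)^{1/\ell}\lambda$, which makes $\tau(U) < 1/2$ whenever $C_1$ is chosen large enough relative to $C_2$. The codegree estimate plugged into the definition gives
\[
\delta_j(H[U], \tau(U)) \;=\; O\!\left(\frac{1}{(\tau(U)\cdot |U|)^{j-1}}\right) \;=\; O\!\bigl(C_2^{-(j-1)}\bigr),
\]
so $\delta(H[U], \tau(U))$ is below $1/(12\ell!)$ once $C_2$ is large. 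The entropy parameter is
\[
f_0 \;=\; \max_U\bigl(-|U|\,\tau(U)\log\tau(U)\bigr) \;=\; \max_U\bigl(C_2\log(|U|/C_2)\bigr) \;=\; O(\log n),
\]
and $\log(e(H)/e_0) = O(\log(n/\lambda))$, so Theorem~\ref{thm:container} delivers a container family $\calC$ satisfying $\log|\calC| \le O(\log n \cdot \log(n/\lambda))$.

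Finally, for every $C \in \calC$, the guarantee $e(H[C]) \le C_1 \lambda^\ell$ combined with the Alon lower bound $e(H[U]) = \Theta(|U|^\ell)$ for $|U| = \omega(\lambda)$ forces $|C| \le C_3 \lambda$ for some constant $C_3 = C_3(\ell)$. A case split into the three regimes $k < eC_3$, $eC_3 \le k \le |C|/2$, and $k > |C|/2$, using in turn $\binom{|C|}{\le k} \le 2|C|^k$, $\binom{|C|}{\le k} \le 2(e|C|/k)^k$, and $\binom{|C|}{\le k} \le 2^{|C|}$, shows that $\binom{|C|}{\le k} \le M \cdot \lambda^k$ uniformly in $k$ for some constant $M = M(\ell)$, provided $\lambda$ exceeds an absolute constant; this last condition is guaranteed asymptotically by the standard spectral bound $\lambda = \Omega(\sqrt{n})$ valid whenever $n = \Theta(d)$. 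Multiplying $|\calC|$ by this bound and absorbing the factor $M$ into the exponent $c \log n \log(n/\lambda)$ by enlarging $c$ produces the claimed estimate. The main obstacle will be the joint calibration of $C_1$ and $C_2$ so that both $\tau(U) < 1/2$ and $\delta(H[U], \tau(U)) \le 1/(12\ell!)$ hold uniformly over all $U$ with $e(H[U]) \ge e_0$, since the first constraint favors small $\tau$ (hence small $C_2$) while the second favors large $C_2$.
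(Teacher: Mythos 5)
You have not proven the statement in question. Theorem~\ref{thm:container} is the hypergraph container theorem itself, which this paper does not prove: it is cited verbatim as a simplified form of \cite[Theorem~5.1]{SaxtonT16}. What you have written is instead a proof of the downstream Theorem~\ref{thm:F-free}, which \emph{applies} the container theorem as a black box to the hypergraph of $\ell$-cliques in a pseudo-random graph. A proof of Theorem~\ref{thm:container} would require the actual container machinery of Saxton--Thomason or Balogh--Morris--Samotij---an iterative fingerprint-and-prune argument that builds the family $\calC$ by repeatedly removing high-codegree vertices and bounding the number of possible fingerprints---and none of that appears here. Your opening sentence, ``apply the hypergraph container method (Theorem~\ref{thm:container})~\ldots'', makes the circularity explicit.

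Read instead as a blind attempt at Theorem~\ref{thm:F-free}, your argument follows the paper's proof closely, but contains one substantive gap. You take $e_0 = C_1\lambda^\ell$ for a fixed constant $C_1$. From $e(H[U]) \geq e_0$ and $e(H[U]) \leq \binom{|U|}{\ell}$, this only yields $|U| = \Omega(\lambda)$, not $|U| = \omega(\lambda)$. However, Theorem~\ref{thm:AlonSubgraph} requires $u = \omega(\lambda \cdot (n/d)^{\Delta(F)})$, which under $n = \Theta(d)$ means $u = \omega(\lambda)$; so you cannot invoke it to conclude $e(H[U]) = \Theta(|U|^\ell)$ on sets of size merely $\Theta(\lambda)$, and in particular your deduction that every container $C$ satisfies $|C| \leq C_3\lambda$ for a fixed $C_3$ does not follow. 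The paper sidesteps this by choosing $e_0 = \lambda^\ell \cdot \log(\tfrac{n}{\lambda})$, which is $\omega(\lambda^\ell)$; then any $U$ with $e(H[U]) \geq e_0$ satisfies $|U| \geq e_0^{1/\ell} = \omega(\lambda)$, Alon's theorem applies legitimately, and the container size bound becomes $|C| \leq c_0 e_0^{1/\ell} = c_0 \lambda\log^{1/\ell}(\tfrac{n}{\lambda})$. The extra polylogarithmic factor in the container size is then absorbed in exactly the kind of case analysis on $\binom{|C|}{\leq k}$ that you perform. Finally, the ``calibration obstacle'' you flag at the end is not a real one: the condition $\delta(H[U],\tau(U)) \leq \tfrac{1}{12\ell!}$ fixes a lower bound on $C_2$ independently, and $\tau(U) < 1/2$ then follows automatically for all relevant $U$ because $|U| = \omega(\lambda) \to \infty$, with no constraint coupling $C_1$ and $C_2$.
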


Equipped with Theorem~\ref{thm:container}, we are ready to prove Theorem~\ref{thm:F-free}.

\begin{proof}[ of Theorem~\ref{thm:F-free}]
Fix an integer $\ell \geq 2$, and let $H$ denote the $\ell$-uniform hypergraph on the vertex set of $G$, where a set $U$ of $\ell$ vertices forms a hyperedge in $H$ if and only if $G[U]$ is a copy of $K_\ell$ in $G$.
Applying Theorem~\ref{thm:AlonSubgraph} with $F$ being $K_\ell$, using $n = \Theta(d)$ and $n = \omega(\lambda)$, we obtain that the number of hyperedges of $H$ satisfies $e(H) = \Theta(n^{\ell})$.
For a given integer $k \leq n$, our goal is to prove that the number of independent sets in $H$ of size at most $k$ is bounded by $2^{O(\log n \cdot \log(\frac{n}{\lambda}))} \cdot \lambda^k$.
In fact, it suffices to prove such a bound on the number of independent sets in $H$ of size exactly $k$.
This indeed follows using the fact that $\lambda \geq \Omega(\sqrt{d})$ (see, e.g.,~\cite{KrivelevichS03}).

We apply the container method, described in Theorem~\ref{thm:container}.
Define, say, $e_0 = \lambda^{\ell} \cdot \log (\frac{n}{\lambda})$, and notice that the assumption $n = \omega(\lambda)$ implies that $e_0 = \omega(\lambda^{\ell})$ and $e_0 \leq e(H)$ (for a sufficiently large $n$).
For a set $U \subseteq V$ of size $u$, consider the hypergraph $H[U]$, denote $m = e(H[U])$, and suppose that $m \geq e_0$.
This obviously implies that $u \geq e_0^{1/\ell} = \omega(\lambda)$, hence using $n = \Theta(d)$, we can apply Theorem~\ref{thm:AlonSubgraph} to obtain that $m = \Theta(u^{\ell})$.
For each $2 \leq j \leq \ell$, every vertex $v \in U$ satisfies in $H[U]$ that $d^{(j)}(v) \leq u^{\ell-j}$, hence for any $\tau >0$,
\[\delta_j(H[U],\tau) \leq \frac{u \cdot u^{\ell-j}}{\tau^{j-1} \cdot \ell \cdot m} \leq O \bigg ( \frac{1}{(\tau \cdot u)^{j-1}}\bigg ).\]
Setting $\tau(U) = \frac{a}{u}$ for a sufficiently large constant $a$, it holds that
\[\delta(H[U],\tau(U)) \leq O \Big ( \sum_{j=2}^{\ell}{\delta_j(H[U],\tau(U))} \Big ) \leq \frac{1}{12 \cdot \ell !}.\]
For a growing $n$, using $u = \omega(\lambda)$, it further follows that $\tau(U) < 1/2$.
We also observe that the quantity $f_0$ from Theorem~\ref{thm:container} satisfies $f_0 \leq O(\log n)$.
Indeed, for every set $U \subseteq V$, our definition of $\tau$ implies that $- |U| \cdot \tau(U) \cdot \log \tau(U) \leq O (\log |U|) \leq O(\log n)$.
We finally notice, using $e(H) = \Theta(n^\ell)$ and $e_0 \geq \lambda^\ell$, that $\log(\frac{e(H)}{e_0}) \leq O(\log(\frac{n}{\lambda}))$.

Now, we derive from Theorem~\ref{thm:container} that there exists a collection $\calC \subseteq P(V)$, such that
\begin{enumerate}
  \item\label{itm:1} every independent set of $H$ is contained in some set of $\calC$,
  \item\label{itm:2} $e(H[C]) \leq e_0$ for each $C \in \calC$, and
  \item\label{itm:3} $\log |\calC| \leq O( f_0 \cdot \log(\frac{e(H)}{e_0})) \leq O(\log n \cdot \log(\frac{n}{\lambda}))$.
\end{enumerate}
By Theorem~\ref{thm:AlonSubgraph}, there exists a constant $c_0$, such that every set $U \subseteq V$ with $|U| > c_0 \cdot e_0^{1/\ell} = \omega(\lambda)$ satisfies $e(H[U]) = \Theta(|U|^\ell) > e_0$.
Hence, Item~\ref{itm:2} above implies that $|C| \leq c_0 \cdot e_0^{1/\ell}$ for each $C \in \calC$.
It therefore follows from Item~\ref{itm:1} that the number of independent sets of $H$ of size $k$ does not exceed
\begin{eqnarray*}
|\calC| \cdot \binom{c_0 \cdot e_0^{1/\ell}}{k} &\leq & 2^{O(\log n \cdot \log(\frac{n}{\lambda}))} \cdot \Big (\frac{c_0 \cdot e_0^{1/\ell} \cdot e}{k} \Big )^k \\
&\leq& 2^{O(\log n \cdot \log(\frac{n}{\lambda}))} \cdot \lambda^{k} \cdot \Big ( \frac{c_0 \cdot \log^{1/\ell}(\frac{n}{\lambda}) \cdot e}{k} \Big )^k \\
&\leq& 2^{O(\log n \cdot \log(\frac{n}{\lambda}))} \cdot \lambda^{k}.
\end{eqnarray*}
Here, the first inequality follows by Item~\ref{itm:3} and the inequality $\binom{n}{k} \leq (\frac{n \cdot e}{k})^k$, and the second by the definition of $e_0$. For the third inequality, notice that the term $( \frac{c_0 \cdot \log^{1/\ell}(\frac{n}{\lambda}) \cdot e}{k} )^k$ is bounded from above by $1$ for $k \geq c_0 \cdot \log^{1/\ell}(\frac{n}{\lambda}) \cdot e$, and by $2^{O(\log n \cdot \log(\frac{n}{\lambda}))}$ for any other $k$.
This completes the proof.
\end{proof}

\section{Nearly Orthogonal Sets over Finite Fields}\label{sec:3}

In this section, we establish the following theorem.

\begin{theorem}\label{thm:General}
For every prime $p$ and every integer $\ell \geq 2$, there exists a constant $\delta = \delta(p,\ell) >0$, such that for all integers $k \geq 2$ and $d \geq k \geq \ell$, the following holds.
There exists a set $\calA$ of at least $d^{\delta \cdot k/\log k}$ non-self-orthogonal vectors of $\Fset_p^d$, such that
\begin{enumerate}
  \item\label{itm:22} every set $A \subseteq \calA$ with $|A|=k$ includes $\ell$ pairwise orthogonal vectors, and
  \item\label{itm:33} for every two sets $A_1, A_2 \subseteq \calA$ with $|A_1|=|A_2|=2k-1$, there exist vectors $v_1 \in A_1$ and $v_2 \in A_2$ with $\langle v_1,v_2 \rangle = 0$.
\end{enumerate}
\end{theorem}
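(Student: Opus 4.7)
The plan is to apply the probabilistic approach of Alon and Szegedy to the orthogonality graph of $\Fset_p^{d'}$ for a dimension $d' \leq d$ chosen optimally, feeding both counting results of Section~\ref{sec:2} into a single alteration argument that secures items~\ref{itm:22} and~\ref{itm:33} simultaneously.

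First I would set up $G$, the orthogonality graph on the non-self-orthogonal vectors of $\Fset_p^{d'}$. By the work of Alon--Krivelevich and Vinh, $G$ is an $(n,D,\lambda)$-graph with $n = \Theta(p^{d'})$, $D = \Theta(p^{d'-1})$, and $\lambda = O(p^{d'/2})$; in particular $n = \Theta(D)$ and $n = \omega(\lambda)$, so the hypotheses of Theorems~\ref{thm:F-free} and~\ref{thm:ARbipartit} are met. Next I would form $\calA_0 \subseteq V(G)$ by including each vertex independently with probability $\rho$, so $\expectation[|\calA_0|]=\rho n$.

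The core computation is to bound, by linearity of expectation, the number of two kinds of ``bad'' structures inside $\calA_0$: (i) subsets of size $k$ that are $K_\ell$-free in $G$, which would falsify item~\ref{itm:22}; and (ii) pairs $(U_1,U_2)$ of subsets of size $2k-1$ each that are bi-independent in $G$, which would falsify item~\ref{itm:33}. Theorem~\ref{thm:F-free} yields an expected count of type~(i) of at most $\rho^{k} \cdot 2^{O(\log n \log(n/\lambda))} \cdot \lambda^{k}$, and Corollary~\ref{cor:ARbipartite<=} yields a structurally analogous bound for type~(ii). I would then pick $\rho$ as large as possible subject to the combined expected bad count being at most $\rho n / 4$, so that a standard alteration step removes all bad structures while retaining a set $\calA$ of size $\Omega(\rho n)$ satisfying both items. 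Embedding $\calA \subseteq \Fset_p^{d'} \hookrightarrow \Fset_p^d$ by appending zero coordinates then produces the desired vectors in $\Fset_p^d$.

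The hardest step is the parameter balance. Feeding the orthogonality-graph parameters into both bounds essentially forces $\rho \leq p^{-d'/2}$, while the factor $2^{O(\log n \log(n/\lambda))}$ in Theorem~\ref{thm:F-free} evaluates to roughly $p^{\Theta((d')^2 \log p)}$ in our graph, capping how large $d'$ may be taken relative to $k$. The sweet spot is $d' = \Theta(\min(d, k/\log p))$, at which $|\calA| \geq p^{\Omega(d')}$; this in turn exceeds $d^{\delta k/\log k}$ for an appropriate constant $\delta = \delta(p,\ell) > 0$. A secondary subtlety is that item~\ref{itm:33} contributes $\rho$ to a higher exponent than item~\ref{itm:22}, so the constraint from Corollary~\ref{cor:ARbipartite<=} must be verified in parallel; fortunately, the quantitative strength of that corollary is tailored so that the optimal $d'$ ends up in the same regime, allowing both items to be enforced on a single random $\calA_0$.
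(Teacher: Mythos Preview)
Your approach has a genuine gap: the direct random-sampling argument in $\Fset_p^{d'}$ cannot produce a bound that grows with $d$. Tracing your own parameter balance, the prefactor $2^{O(\log n \cdot \log(n/\lambda))} = 2^{\Theta((d')^2(\log p)^2)}$ forces $d' = O(k/\log p)$, and with $\rho \approx p^{-d'/2}$ you then retain $|\calA| \approx p^{\Theta(d')} = 2^{\Theta(k)}$ vectors. But the theorem asks for $d^{\delta k/\log k} = 2^{\delta k \log d/\log k}$, and since $d$ ranges over all integers $\geq k$, the ratio $\log d/\log k$ is unbounded. Your final sentence ``this in turn exceeds $d^{\delta k/\log k}$'' is therefore false whenever $d$ is super-polynomial in $k$; no choice of $\delta>0$ works uniformly. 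In short, sampling inside a single orthogonality graph caps $|\calA|$ at a function of $k$ alone, with no dependence on $d$.

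What the paper does instead is an $m$-fold \emph{tensor amplification}: it works in $\Fset_p^t$ with $t = \Theta(k)$, applies the counting theorems coordinate-wise to bound $|\calB_1|$ and $|\calB_2|$, and then samples random $m$-tuples $(v_1,\ldots,v_m)\in(\Fset_p^t)^m$ whose tensor products $v_1\otimes\cdots\otimes v_m$ lie in $\Fset_p^{t^m}$. Choosing $m$ maximal with $t^m\leq d$ gives $m=\Theta(\log d/\log k)$, and the construction yields $|\calA|\geq p^{\Omega(mt)}=d^{\Omega(k/\log k)}$. The key identity $\langle u_1\otimes\cdots\otimes u_m,\,v_1\otimes\cdots\otimes v_m\rangle=\prod_j\langle u_j,v_j\rangle$ is what lets bad configurations in $\calA$ project to bad configurations in each coordinate, so the bounds from Theorems~\ref{thm:Kell-free} and~\ref{thm:bipartite-free} (each applied in dimension $t$, not $d$) get raised to the $m$th power and control the global union bound. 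This tensoring step is precisely the missing idea needed to make the size scale with $d$.
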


We observe that Theorem~\ref{thm:General} yields Theorems~\ref{thm:IntroBeta} and~\ref{thm:IntroAlpha}.
As mentioned earlier, to prove these theorems, it suffices to consider the field $\Fset_p$ of order $p$.
For Theorem~\ref{thm:IntroBeta}, apply Theorem~\ref{thm:General} with $k$ being $\lfloor \frac{k}{2} \rfloor+1$ and with $\ell=1$ to obtain, using Item~\ref{itm:33}, the desired bound on $\beta(d,k,\Fset_p)$ for an appropriate $\delta = \delta(p)$.
For Theorem~\ref{thm:IntroAlpha}, apply Theorem~\ref{thm:General} with $k$ and $\ell$ being $k+1$ and $\ell+1$ respectively to obtain, using Item~\ref{itm:22}, the desired bound on $\alpha(d,k,\ell,\Fset_p)$.

Towards the proof of Theorem~\ref{thm:General}, we apply the results from the previous section to a family of graphs, defined next.

\subsection{The Orthogonality Graph}

For a prime $p$ and an integer $t$, let $G(p,t)$ denote the graph whose vertices are all the nonzero vectors in $\Fset_p^t$, where two such (not necessarily distinct) vectors are adjacent if and only if they are orthogonal.
The second largest eigenvalue of $G(p,t)$ was determined in~\cite{AlonK97,Vinh08a}, as stated below.

\begin{proposition}[{\cite{AlonK97,Vinh08a}}]\label{prop:G(p,t)}
For every prime $p$ and every integer $t$, the graph $G(p,t)$ is an $(n,d,\lambda)$-graph for
\[n=p^t-1, ~~d= p^{t-1}-1, \mbox{~~and~~} \lambda = (p-1) \cdot p^{t/2-1}.\]
\end{proposition}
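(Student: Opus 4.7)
The plan is to pin down the spectrum of the adjacency matrix $A$ of $G(p,t)$ by computing $A^2$ in closed form and combining this identity with two simpler symmetric matrices that commute with $A$. The vertex count $n=p^t-1$ and the regularity with degree $d=p^{t-1}-1$ are immediate: the neighborhood of any nonzero $v$ is the set of nonzero vectors of the hyperplane $v^\perp$, with a loop at $v$ contributing $1$ to its own degree whenever $v$ is self-orthogonal.

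For the non-principal eigenvalues, I would first compute, for nonzero $u,v$, the entry $(A^2)_{u,v}$ as the number of nonzero $w\in u^\perp\cap v^\perp$. Depending on whether $v\in\linspan(u)$, this intersection is a $(t-1)$- or $(t-2)$-dimensional subspace, which yields an identity of the form
\[ A^2 \;=\; p^{t-2}(p-1)\,L \;+\; (p^{t-2}-1)\,J,\]
where $J$ is the all-ones $n\times n$ matrix and $L$ is the $0/1$ matrix with $L_{u,v}=1$ iff $v\in\linspan(u)$. The crucial observation I would then make is that $A$, $J$, and $L$ pairwise commute: $AJ=dJ$ follows from regularity, $JL=(p-1)J$ from the fact that each row of $L$ has exactly $p-1$ ones, and the key identity $AL=(p-1)A$ from the observation that $\langle u,cv\rangle=c\langle u,v\rangle$ vanishes for some nonzero $c\in\Fset_p$ precisely when it vanishes for all of them, i.e., precisely when $\langle u,v\rangle=0$. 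Consequently $A$, $J$, $L$ admit a common orthonormal eigenbasis, and the eigenvalues of $A$ on each joint eigenspace are forced by those of $J$ and $L$ through the $A^2$ identity.

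To finish, I would exploit the fact that $L$ is block diagonal with one $(p-1)\times(p-1)$ all-ones block per projective point, so its eigenvalues are $p-1$ (with multiplicity $(p^t-1)/(p-1)$) and $0$ (with the complementary multiplicity). The joint eigenvector $\mathbf{1}$ gives the principal eigenvalue $d$ of $A$. On its orthogonal complement, $J$ acts as zero, so the identity collapses to $A^2=p^{t-2}(p-1)L$: on $\ker L$ this forces $A=0$, and on the remaining $(p-1)$-eigenspace of $L$ it yields $A^2 = p^{t-2}(p-1)^2 I$, so the corresponding eigenvalues of $A$ are $\pm(p-1)\,p^{t/2-1}$. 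The bound $\lambda=(p-1)\,p^{t/2-1}$ on the non-principal eigenvalues then follows.

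The step requiring the most care is setting up the auxiliary matrix $L$ and verifying the commutation $AL=(p-1)A$; once that is in hand, the reduction of the $A^2$ identity to the two $L$-eigenspaces of $\mathbf{1}^\perp$ is essentially bookkeeping. An alternative route via the nontrivial additive characters of $\Fset_p$ would recover the same spectrum, but requires separating the contribution of the zero vector, which the $A^2$ approach sidesteps by working directly on the nonzero vertex set of $G(p,t)$.
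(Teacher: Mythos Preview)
The paper does not actually prove this proposition; it is quoted as a known result from the cited references, so there is no in-paper proof to compare against. Your argument is correct and gives a clean, self-contained derivation of the spectrum of $G(p,t)$: the identity $A^2 = p^{t-2}(p-1)\,L + (p^{t-2}-1)\,J$ is right, the commutation relations $AJ=JA=dJ$, $LJ=JL=(p-1)J$, and $AL=LA=(p-1)A$ hold for the reasons you state, and the case split on $\mathbf{1}^\perp$ according to the two eigenspaces of $L$ correctly produces the non-principal eigenvalues $0$ and $\pm(p-1)\,p^{t/2-1}$. The only point worth making explicit is that $\ker L$ is $A$-invariant (which you get from $[A,L]=0$), so that $A^2=0$ there really does force $A=0$ on that subspace via symmetry of $A$; you clearly have this in mind. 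The character-sum approach you mention as an alternative is closer in spirit to what appears in the original sources, but your matrix-identity route is arguably more transparent and handles the loops uniformly.
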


By applying Corollary~\ref{cor:ARbipartite<=} to the graph $G(p,t)$, we obtain the following result.

\begin{theorem}\label{thm:bipartite-free}
For every prime $p$, there exists a constant $c=c(p)$, such that for all integers $t$ and $k$, the number of pairs $(C_1,C_2)$ of subsets of $\Fset_p^t \setminus \{0\}$ with $|C_1| \leq k$ and $|C_2| \leq k$, such that $\langle v_1, v_2 \rangle \neq 0$ for all $v_1 \in C_1$ and $v_2 \in C_2$, is at most $2^{c \cdot (t^2+k)} \cdot p^{t \cdot k}$.
\end{theorem}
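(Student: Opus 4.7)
The plan is to apply Corollary~\ref{cor:ARbipartite<=} directly to the orthogonality graph $G(p,t)$ introduced above. First I would observe that the hypothesis on a pair $(C_1,C_2)$ --- that $\langle v_1,v_2\rangle \neq 0$ for every $v_1 \in C_1$ and every $v_2 \in C_2$ --- is precisely the condition that no edge of $G(p,t)$ connects a vertex of $C_1$ to a vertex of $C_2$. This uses the convention from Section~\ref{sec:2} that a loop at a self-orthogonal vector counts as an edge, which correctly excludes pairs $(C_1,C_2)$ that share a self-orthogonal vector from both the combinatorial and the graph-theoretic definitions.

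Next, I would plug in the parameters of $G(p,t)$ given by Proposition~\ref{prop:G(p,t)}, namely $n = p^t - 1$, $d = p^{t-1}-1$, and $\lambda = (p-1)p^{t/2-1}$. A direct computation yields $n/d \leq 2p$, so $s = 2n\log n/d \leq c_1(p)\cdot t$; and $\lambda n/d \leq c_2(p) \cdot p^{t/2}$, so $(2\lambda n/d)^2 \leq c_3(p) \cdot p^t$. Therefore $\max(n,(2\lambda n/d)^2) \leq C_p \cdot p^t$ for some constant $C_p = C_p(p)$. Substituting these estimates into Corollary~\ref{cor:ARbipartite<=} gives an upper bound of
\[
(k+1)^2 \cdot (C_p \cdot p^t)^{s+k} \;=\; (k+1)^2 \cdot C_p^{s+k} \cdot p^{ts} \cdot p^{tk}.
\]
I would then bound each factor separately: $(k+1)^2 \leq 2^{O(k)}$; $C_p^{s+k} \leq 2^{O_p(t+k)}$; and $p^{ts} \leq 2^{O_p(t^2)}$, the last bound using $s = O_p(t)$ together with the fact that $\log p$ is a constant. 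Collecting exponents, everything outside of the $p^{tk}$ term is at most $2^{c(t^2+k)}$ for a suitable $c = c(p)$, which is the claimed bound.

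The calculation is largely routine; the only conceptual point --- and the step that really uses the structure of the orthogonality graph rather than an arbitrary pseudo-random graph --- is the check that $\lambda n/d$ is only of order $p^{t/2} = \Theta_p(\sqrt{n})$. This means $(2\lambda n/d)^2$ is comparable to $n$ inside the $\max$, and no factor much larger than $p^t$ is raised to the power $s+k$. Were $\lambda$ any larger than roughly $\sqrt{d}$, this base would blow up and contaminate the $p^{tk}$ term, so the near-optimal pseudo-randomness $\lambda = \Theta_p(\sqrt{d})$ of $G(p,t)$ is exactly what makes the bound go through.
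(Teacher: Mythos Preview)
Your proposal is correct and follows essentially the same approach as the paper: apply Corollary~\ref{cor:ARbipartite<=} to the orthogonality graph $G(p,t)$ with the spectral parameters supplied by Proposition~\ref{prop:G(p,t)}, and then do a routine estimation of each factor. The paper's computation is organized slightly differently (it notes $(2\lambda n/d)^2 \geq n$ and then factors $(2\lambda n/d)^{2(s+k)}$ as $(2n/d)^{2(s+k)}\lambda^{2s}\lambda^{2k}$), but the underlying arithmetic and the key observation that $\lambda = \Theta_p(p^{t/2})$ keeps the base at $\Theta_p(p^t)$ are the same as yours.
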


\begin{proof}
Fix a prime $p$, and let $t$ and $k$ be some integers.
By Proposition~\ref{prop:G(p,t)}, the graph $G(p,t)$ is an $(n,d,\lambda)$-graph for $n=p^t-1$, $d= p^{t-1}-1$, and $\lambda = (p-1) \cdot p^{t/2-1} \leq p^{t/2}$.
Letting $s = \frac{2n \log n}{d}$, it holds that $s = \Theta(t)$, and it is not difficult to verify that $(\frac{2 \lambda n}{d})^2 \geq n$.
By Corollary~\ref{cor:ARbipartite<=}, the number of pairs $(C_1,C_2)$ of sets of vertices of $G(p,t)$ with $|C_1| \leq k$ and $|C_2| \leq k$, such that no edge connects a vertex of $C_1$ to a vertex of $C_2$, is at most
\begin{eqnarray*}
(k+1)^2 \cdot \Big (\frac{2 \lambda n}{d} \Big )^{2 \cdot (s+k)} & = & (k+1)^2 \cdot \Big (\frac{2 n}{d} \Big )^{2 \cdot (s+k)} \cdot \lambda^{2s} \cdot \lambda^{2k} \\ &\leq& 2^{O(k)} \cdot 2^{O(s+k)} \cdot 2^{O(s \cdot t)} \cdot p^{t \cdot k} \leq 2^{O(t^2 +k)} \cdot p^{t \cdot k}.
\end{eqnarray*}
By the definition of the graph $G(p,t)$, the proof is complete.
\end{proof}

By applying Theorem~\ref{thm:F-free} to the graph $G(p,t)$, we obtain the following result.

\begin{theorem}\label{thm:Kell-free}
For every prime $p$ and every integer $\ell \geq 2$, there exists a constant $c=c(p,\ell)$, such that for all integers $t$ and $k$, the number of subsets of $\Fset_p^t \setminus \{0\}$ of size at most $k$ that include no $\ell$ pairwise orthogonal vectors is at most $2^{c \cdot t^2} \cdot p^{t \cdot k/2}$.
\end{theorem}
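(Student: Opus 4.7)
The plan is to apply Theorem~\ref{thm:F-free} directly to the orthogonality graph $G(p,t)$, in complete parallel with how Theorem~\ref{thm:bipartite-free} was deduced from Corollary~\ref{cor:ARbipartite<=}. By Proposition~\ref{prop:G(p,t)}, $G(p,t)$ is an $(n,d,\lambda)$-graph with $n = p^t - 1$, $d = p^{t-1} - 1$, and $\lambda = (p-1)\cdot p^{t/2 - 1}$. The ratio $n/d$ is bounded by a constant depending only on $p$, so $n = \Theta(d)$ and $d \leq 0.9 n$ (since $d/n$ is close to $1/p \leq 1/2$), and $n/\lambda = \Theta(p^{t/2})$, giving $n = \omega(\lambda)$ as $t \to \infty$.

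The next step is to recast the combinatorial condition in graph-theoretic terms. Two distinct non-zero vectors of $\Fset_p^t$ are adjacent in $G(p,t)$ precisely when they are orthogonal, so a subset $U \subseteq \Fset_p^t \setminus \{0\}$ contains no $\ell$ pairwise orthogonal vectors if and only if the induced subgraph $G(p,t)[U]$ contains no copy of $K_\ell$. The subsets we want to count are therefore exactly the induced $K_\ell$-free subgraphs of size at most $k$.

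Invoking Theorem~\ref{thm:F-free} with $F = K_\ell$ then yields a constant $c' = c'(p,\ell)$ such that, for all sufficiently large $t$ and all $k \leq n$, the number of such $U$ is at most $2^{c' \cdot \log n \cdot \log(n/\lambda)} \cdot \lambda^k$. Substituting the parameters above gives $\log n \leq t \log p$ and $\log(n/\lambda) \leq \tfrac{t}{2}\log p + O(1)$, so the exponential prefactor is $2^{O(t^2)}$, while $\lambda^k \leq p^{tk/2}$. This produces the claimed bound $2^{c \cdot t^2} \cdot p^{tk/2}$.

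The main (and essentially only) difficulty is the bookkeeping around the corner cases where Theorem~\ref{thm:F-free}'s asymptotic regime does not yet apply. When $k > n$, the trivial count $2^n$ of all subsets is already dominated by $p^{tk/2} \geq p^{tn/2} \geq 2^n$ (using $p \geq 2$ and $t \geq 2$, so that $p^{t/2} \geq 2$); and for the finitely many bounded values of $t$ that the asymptotic statement of Theorem~\ref{thm:F-free} excludes, the total number of subsets of $\Fset_p^t \setminus \{0\}$ is absolutely bounded and can be absorbed into $2^{c \cdot t^2}$ by choosing the constant $c = c(p,\ell)$ sufficiently large. These adjustments mirror the ones implicit in the proof of Theorem~\ref{thm:bipartite-free}, and no further ingredients are required.
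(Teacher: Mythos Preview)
Your proposal is correct and takes essentially the same approach as the paper: apply Theorem~\ref{thm:F-free} to the orthogonality graph $G(p,t)$, using the spectral parameters from Proposition~\ref{prop:G(p,t)} to verify the hypotheses and then substitute $\log n = \Theta(t)$, $\log(n/\lambda) = \Theta(t)$, $\lambda \leq p^{t/2}$. Your treatment of the corner cases (bounded $t$, and $k > n$) is in fact slightly more explicit than the paper's, which simply remarks that small $t$ can be absorbed into the constant and does not separately discuss $k>n$.
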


\begin{proof}
Fix a prime $p$ and an integer $\ell \geq 2$, and let $t$ and $k$ be some integers.
It may be assumed that $t$ is sufficiently large, because if $t$ is bounded by some constant, then so is $k$, and the statement of the theorem trivially holds with an appropriate constant $c$.
By Proposition~\ref{prop:G(p,t)}, the graph $G(p,t)$ is an $(n,d,\lambda)$-graph for $n=p^t-1$, $d= p^{t-1}-1$, and $\lambda = (p-1) \cdot p^{t/2-1} \leq p^{t/2}$.
Note that $d \leq 0.9 \cdot n$, and that for a growing $t$, we have $n = \Theta(d)$ and $n = \omega(\lambda)$. By Theorem~\ref{thm:F-free}, the number of sets of at most $k$ vertices of $G(p,t)$ with no copy of $K_\ell$ is at most
\[2^{O(\log^2 n)} \cdot \lambda^k \leq 2^{O(t^2)} \cdot p^{t \cdot k/2}.\]
By the definition of the graph $G(p,t)$, the proof is complete.
\end{proof}

\subsection{Proof of Theorem~\ref{thm:General}}

Before turning to the proof of Theorem~\ref{thm:General}, let us collect a few notations and facts about the tensor product operation on vectors, which plays a central role in the argument.
For a field $\Fset$ and integers $t_1,t_2$, the tensor product $w = u \otimes v$ of two vectors $u \in \Fset^{t_1}$ and $v \in \Fset^{t_2}$ is defined as the vector in $\Fset^{t_1 \cdot t_2}$, whose coordinates are indexed by the pairs $(i_1,i_2)$ with $i_1 \in [t_1]$ and $i_2 \in [t_2]$, defined by $w_{(i_1,i_2)} = u_{i_1} \cdot v_{i_2}$. Note that for integers $t$ and $m$ and for given vectors $v_1, \ldots, v_m \in \Fset^t$, the vector $v_1 \otimes \cdots \otimes v_m$ lies in $\Fset^{t^m}$ and consists of all the $t^m$ possible products of $m$ values, one from each vector $v_j$ with $j \in [m]$.
It is well known and easy to verify that for vectors $u_1, \ldots, u_m \in \Fset^t$
and $v_1, \ldots, v_m \in \Fset^t$, the two vectors $u = u_1 \otimes \cdots \otimes u_m$ and $v = v_1 \otimes \cdots \otimes v_m$ satisfy
\begin{eqnarray}\label{eq:tensor}
\langle u, v \rangle = \prod_{j=1}^{m}{\langle u_j , v_j \rangle}.
\end{eqnarray}

\begin{proof}[ of Theorem~\ref{thm:General}]
Let $p$ be a fixed prime, and let $\ell \geq 2$ be a fixed integer.
For integers $t$ and $m$, let $Q \subseteq (\Fset_p^t)^m$ denote the collection of all $m$-tuples of non-self-orthogonal vectors of $\Fset_p^t$.
Notice that the number of non-self-orthogonal vectors in $\Fset^t_p$ is at least $p^{t-1}$, because any choice for the first $t-1$ entries of a vector in $\Fset^t_p$ can be extended to a non-self-orthogonal vector by choosing for its last entry either $0$ or $1$. This implies that $|Q| \geq p^{m \cdot (t-1)}$.

We apply the probabilistic method. For an integer $n$, let $\calZ = (z_1, \ldots, z_n)$ be a random sequence of $n$ elements chosen uniformly and independently from $Q$, and let $\calA \subseteq \Fset_p^{t^m}$ be the set of all $m$-fold tensor products of the $m$-tuples of $\calZ$, that is, the vectors $v_1 \otimes \cdots \otimes v_m$ for which $z_i = (v_1, \ldots, v_m)$ for some $i \in [n]$.
The vectors of $\calA$ are non-self-orthogonal, because for every $(v_1, \ldots, v_m) \in Q$, it follows from~\eqref{eq:tensor} that the vector $v = v_1 \otimes  \cdots \otimes v_m$ satisfies $\langle v,v \rangle = \prod_{i=1}^{m}{\langle v_i,v_i \rangle} \neq 0$.
We will show that for a given integer $k$ and for an appropriate choice of the integers $t$, $m$, and $n$, the set $\calA$ satisfies with positive probability the properties declared in the theorem.

Let $\calC_1$ denote the collection of all (non-empty) subsets of $\Fset_p^t \setminus \{0\}$ of size at most $k$ that include no $\ell$ pairwise orthogonal vectors.
Consider the collection
\[ \calB_1 = \{ C^{(1)} \times C^{(2)} \times \cdots \times C^{(m)} \mid C^{(j)} \in \calC_1 \mbox{~for all~}j \in [m] \}.\]
Notice that each set $B \in \calB_1$ consists of at most $k^m$ $m$-tuples of vectors in $\Fset_p^t$.
Let $\calE_1$ denote the event that some set of $\calB_1$ includes at least $k$ elements of the sequence $\calZ$.
More formally, we define $\calE_1$ as the event that there exist sets $B \in \calB_1$ and $I \subseteq [n]$ with $|I|=k$, such that $\{z_i \mid i \in I\} \subseteq B$.
By the union bound, it follows that
\begin{eqnarray}\label{eq:E1}
\Prob{}{\calE_1} \leq |\calB_1| \cdot \binom{n}{k} \cdot \bigg ( \frac{k^m}{|Q|} \bigg)^k \leq |\calB_1| \cdot \bigg ( \frac{n \cdot k^m}{p^{m \cdot (t-1)}} \bigg)^k.
\end{eqnarray}

Let $\calC_2$ denote the collection of all pairs $(C_1,C_2)$ of (non-empty) subsets of $\Fset_p^t \setminus \{0\}$ of size at most $k$, such that no vector of $C_1$ is orthogonal to a vector of $C_2$. Consider the collection $\calB_2$ of all pairs $(B_1,B_2)$ of the form
\begin{eqnarray}\label{eq:B1B2}
B_1 = C_1^{(1)} \times C_1^{(2)} \times \cdots \times C_1^{(m)} \mbox{~~and~~} B_2 = C_2^{(1)} \times C_2^{(2)} \times \cdots \times C_2^{(m)},
\end{eqnarray}
where $(C^{(1)}_1,C^{(1)}_2), (C^{(2)}_1,C^{(2)}_2), \ldots, (C^{(m)}_1,C^{(m)}_2)$ are $m$ pairs of the collection $\calC_2$.
As before, each $B_1$ and $B_2$ consists of at most $k^m$ $m$-tuples of vectors in $\Fset_p^t$.
Let $\calE_2$ denote the event that there exist a pair $(B_1,B_2) \in \calB_2$ and disjoint sets $I_1,I_2 \subseteq [n]$ with $|I_1|=|I_2|=k$, such that $\{z_i \mid i \in I_1\} \subseteq B_1$ and $\{z_i \mid i \in I_2\} \subseteq B_2$.
By the union bound, it follows that
\begin{eqnarray}\label{eq:E2}
\Prob{}{\calE_2} \leq |\calB_2| \cdot \binom{n}{k}^2 \cdot \bigg ( \frac{k^m}{|Q|} \bigg)^{2k} \leq |\calB_2| \cdot \bigg ( \frac{n \cdot k^m}{p^{m \cdot (t-1)}} \bigg)^{2k}.
\end{eqnarray}

Let us set the parameters of the construction, ensuring that the event $\calE_1 \vee \calE_2$ occurs with probability smaller than $1$.
Let $d$ and $k$ be two integers satisfying $d \geq k \geq \ell$.
We may assume, whenever needed, that $k$ is sufficiently large, because constant values of $k$ can be handled by an appropriate choice of the constant $\delta$ from the assertion of the theorem, using the $d$ vectors of the standard basis of $\Fset_p^d$.
By Theorems~\ref{thm:Kell-free} and~\ref{thm:bipartite-free}, there exists a constant $c \geq 1$, such that $|\calC_1| \leq 2^{c \cdot t^2} \cdot p^{t \cdot k/2}$ and $|\calC_2| \leq 2^{c \cdot (t^2+k)} \cdot p^{t \cdot k}$, implying that
\begin{eqnarray}\label{eq:|calB|}
|\calB_1| = |\calC_1|^m \leq 2^{c m \cdot t^2} \cdot p^{m t k/2}~~\mbox{and}~~|\calB_2| = |\calC_2|^m \leq 2^{c m \cdot (t^2+k)} \cdot p^{m t k}.
\end{eqnarray}
Let $t$ be the largest integer satisfying, say, $k \geq 5c \cdot t$, and let $m$ be the largest integer satisfying $d \geq t^m$.
By the assumption $d \geq k$, we have $m \geq 1$.
Set $n = \lfloor p^{m \cdot t/4} \rfloor$.
Combining~\eqref{eq:E1} and~\eqref{eq:|calB|}, we obtain that
\[ \Prob{}{\calE_1} \leq 2^{c m \cdot t^2} \cdot p^{mtk/2} \cdot \bigg ( \frac{n \cdot k^m}{p^{m \cdot (t-1)}} \bigg )^k
 \leq  2^{c m \cdot t^2} \cdot \bigg ( \frac{k^m}{p^{m \cdot (t/4-1)}} \bigg )^k
 \leq  \bigg ( 2^{t/5} \cdot \frac{5c(t+1)}{p^{t/4-1}} \bigg )^{mk} < \frac{1}{2},\]
where the second inequality holds by our choice of $n$, the third by our choice of $t$, and the fourth by the assumption that $k$ (and thus $t$) is sufficiently large. By a similar calculation, combining~\eqref{eq:E2} and~\eqref{eq:|calB|}, we obtain that
\[ \Prob{}{\calE_2} \leq 2^{c m \cdot (t^2+k)} \cdot p^{mtk} \cdot \bigg ( \frac{n \cdot k^{m}}{p^{m \cdot (t-1)}} \bigg )^{2k}
\leq 2^{2c m \cdot t^2} \cdot \bigg ( \frac{k^m}{p^{m \cdot (t/4-1)}} \bigg )^{2k} <\frac{1}{2},\]
where for the second inequality we further use the inequality $k \leq t^2$, which holds assuming that $k$ is sufficiently large.
It thus follows, by the union bound, that the probability that the event $\calE_1 \vee \calE_2$ occurs is smaller than $1$.
This implies that there exists a choice for the sequence $\calZ$ for which the event $\calE_1 \vee \calE_2$ does not occur.
We fix such a choice for $\calZ$ and consider the corresponding set $\calA$.

We show now that the set $\calA$ satisfies the required properties.
We start by proving that every set $A \subseteq \calA$ with $|A|=k$ includes $\ell$ pairwise orthogonal vectors.
To do so, we show that for every set $I \subseteq [n]$ with $|I|=k$, the $m$-fold tensor products associated with the $m$-tuples of $\{z_i \mid i \in I\}$ include $\ell$ pairwise orthogonal vectors.
So assume for contradiction that there exists a set $I \subseteq [n]$ with $|I|=k$ that does not satisfy this property.
For each $j \in [m]$, let $C^{(j)}$ denote the set of the $j$th projections of the tuples of $\{z_i \mid i \in I\}$, and notice that $|C^{(j)}| \leq k$. Using the property of tensor product given in~\eqref{eq:tensor}, it follows that $C^{(j)}$ does not include $\ell$ pairwise orthogonal vectors. Therefore, the set $C^{(1)} \times C^{(2)} \times \cdots \times C^{(m)}$ lies in $\calB_1$ and contains $\{z_i \mid i \in I\}$. This contradicts the fact that the event $\calE_1$ does not occur for our choice of $\calZ$.

We next prove the second statement of the theorem.
Assume for contradiction that there exist two sets $A_1, A_2 \subseteq \calA$ with $|A_1|=|A_2|=2k-1$, such that no vector of $A_1$ is orthogonal to a vector of $A_2$.
If $|A_1 \cap A_2| \geq k$, then there exists a set of $k$ vectors of $\calA$ with no orthogonal pair, in contradiction to the property of $\calA$ shown above.
Otherwise, there exist disjoint sets $A'_1 \subseteq A_1 \setminus A_2$ and $A'_2 \subseteq A_2 \setminus A_1$ satisfying $|A'_1|=|A'_2|=k$.
Let $I_1,I_2 \subseteq [n]$ be sets with $|I_1|=|I_2|=k$, such that the vectors of $A'_1$ and $A'_2$ are the $m$-fold tensor products associated with the $m$-tuples of $\{z_i \mid i \in I_1\}$ and $\{z_i \mid i \in I_2\}$ respectively. Note that $I_1$ and $I_2$ are disjoint.
For each $j \in [m]$, let $C_1^{(j)}$ and $C_2^{(j)}$ denote the sets of the $j$th projections of the tuples of $\{z_i \mid i \in I_1\}$ and $\{z_i \mid i \in I_2\}$ respectively, and notice that $|C_1^{(j)}| \leq k$ and $|C_2^{(j)}| \leq k$.
Using the property of tensor product given in~\eqref{eq:tensor}, it follows that no vector of $C_1^{(j)}$ is orthogonal to a vector of $C_2^{(j)}$. Therefore, there exists a pair $(B_1,B_2) \in \calB_2$, defined as in~\eqref{eq:B1B2}, for which it holds that $\{z_i \mid i \in I_1\} \subseteq B_1$ and $\{z_i \mid i \in I_2\} \subseteq B_2$.
This contradicts the fact that the event $\calE_2$ does not occur for our choice of $\calZ$.

We finally analyze the size of the collection $\calA$.
Recall that the vectors of $\calA$ are non-self-orthogonal.
It follows from the above discussion that no vector of $\calA$ is associated with more than $k-1$ of the $m$-tuples of $\calZ$.
This implies that
\[ |\calA| \geq \frac{n}{k-1} \geq p^{\Omega(m \cdot t)} \geq p^{\Omega((\log d) \cdot t / (\log t))} \geq d^{\Omega(t/\log t)} \geq d^{\Omega(k/\log k)},\]
where the multiplicative constants hidden by the $\Omega$ notation depend only on $p$ and $\ell$.
By adding $d-t^m$ zero entries at the end of each vector of $\calA$, we obtain the desired subset of $\Fset_p^d$, and the proof is complete.
\end{proof}

\section*{Acknowledgments}
We thank the anonymous referees for their constructive and helpful feedback.

\bibliographystyle{abbrv}
\bibliography{larger_nearly}

\end{document}